\documentclass[11pt,letterpaper]{article}
\usepackage{fix-cm}
\usepackage[T1]{fontenc}
\usepackage{newtxtext}
\usepackage{amsmath,amssymb,amsthm,mathtools,bm}
\usepackage{newtxmath}
\usepackage[margin=1in]{geometry}
\usepackage{microtype,booktabs,tabularx,array,enumitem,graphicx}
\usepackage[round,authoryear]{natbib}
\usepackage{algorithm,algpseudocode}
\usepackage[hidelinks]{hyperref}
\usepackage{fancyhdr}
\hypersetup{pdftitle={The Minimax Cost of Smoothness in Fully First-Order Stochastic Bilevel Optimization},pdfauthor={Anonymous authors}}
\setlist[enumerate]{leftmargin=*,itemsep=2pt,topsep=3pt}
\newtheorem{theorem}{Theorem}[section]
\newtheorem{lemma}[theorem]{Lemma}
\newtheorem{proposition}[theorem]{Proposition}
\newtheorem{corollary}[theorem]{Corollary}
\newtheorem{assumption}{Assumption}
\theoremstyle{definition}\newtheorem{definition}[theorem]{Definition}
\theoremstyle{remark}
\newcommand{\R}{\mathbb R}
\newcommand{\N}{\mathbb N}
\newcommand{\E}{\mathbb E}
\newcommand{\Prob}{\mathbb P}
\newcommand{\norm}[1]{\left\|#1\right\|}

\newcommand{\ip}[2]{\left\langle #1,#2\right\rangle}
\newcommand{\Lip}{\operatorname{Lip}}

\newcommand{\diag}{\operatorname{diag}}
\newcommand{\eps}{\epsilon}
\newcommand{\cF}{\mathcal F}
\newcommand{\cO}{\mathcal O}

\newcommand{\D}{\mathcal D}
\newcommand{\Bern}{\operatorname{Bernoulli}}
\newcommand{\dir}{\operatorname{dir}}
\newcolumntype{Y}{>{\raggedright\arraybackslash}X}
\DeclareMathOperator*{\argmin}{arg\,min}
\allowdisplaybreaks[1]

\title{The Minimax Cost of Smoothness in Fully First-Order Stochastic Bilevel Optimization}
\newcommand{\authornames}{Wendao Wu, Haihan Zhang, Chenheng Zhang, Yanyi Li, Chunyuan Zheng, Cong Fang, Haoxuan Li, Zhouchen Lin}
\newcommand{\affiliation}{Peking University}

\newcommand{\mail}[1]{\href{mailto:#1}{\texttt{#1}}}
\newif\ifanonymous
\anonymousfalse
\ifanonymous
\author{Anonymous Authors}
\hypersetup{pdfauthor={Anonymous Authors}}
\else
\author{Wendao Wu$^{1,*}$\quad Haihan Zhang$^{1,*}$\quad Chenheng Zhang$^{1,*}$\\[0.20em]
Yanyi Li$^{1,*}$\quad Chunyuan Zheng$^1$\\[0.20em]
Cong Fang$^{1,\dagger}$\quad Haoxuan Li$^{1,\dagger}$\quad Zhouchen Lin$^{1,\dagger}$\\[0.60em]
\small $^1$\affiliation\\[0.35em]
\footnotesize\mail{colombo.2026@outlook.com}\\[-0.05em]
\small $^*$Equal contribution.\quad $^\dagger$Corresponding authors.}
\hypersetup{pdfauthor={\authornames}}
\fi

\date{}

\begin{document}
\maketitle
\begin{abstract}
We study the sample complexity of finding a point with expected hypergradient norm at most $\eps>0$ in nonconvex--strongly-convex stochastic bilevel optimization. Algorithms access only fresh, globally unbiased first-order samples with bounded variance. For lower-variable smoothness order $p\ge1$, F$^2$SA-$p$ achieves the upper bound $\widetilde O(p\eps^{-4-2/p})$ \citep{chen2026}, but whether the exponent $4+2/p$ is necessary remains unresolved. We establish matching upper and lower bounds $\Theta_p(\eps^{-4-2/p})$ under fixed nondegenerate regularity budgets, with constants allowed to depend on $p$. The lower bound applies to unrestricted randomized algorithms under a fixed sample budget and allows dimension to grow with accuracy. Our construction hides successive descent directions while preserving global unbiasedness and all prescribed lower-variable smoothness bounds, even with a scalar lower variable and exact upper gradients. We further characterize the minimax fixed sample budget $Q_{p,\beta}^*(\eps)$ under derivative Lipschitz budgets $L_j=M\Lambda^j(j!)^\beta$ for $j\ge3$, with $M,\Lambda>0$ and $\beta\ge0$ fixed independently of order and accuracy:
\[
 Q_{p,\beta}^*(\eps)=\Theta\!\left(\eps^{-4}\left[\inf_{\substack{r\in\N\\1\le r\le p}}r^\beta\eps^{-1/r}\right]^2\right),\qquad p\in\N\cup\{\infty\}.
\]
Here $r$ is a positive integer differentiation order, $p=\infty$ imposes the envelope at every order, and the constants are independent of $p$. At infinite order, this gives $\Theta(\eps^{-4})$ for the exponential envelope ($\beta=0$) and $\Theta(\eps^{-4}\log^2(1/\eps))$ for the analytic envelope ($\beta=1$). A gradient-difference method with weighted independent batches attains these bounds by controlling differentiation and inner-solve costs uniformly in order. These results identify the precise benefit of quantitative smoothness, whereas qualitative analyticity alone still permits complexity $\Theta(\eps^{-6})$.
\end{abstract}
\noindent\textbf{Keywords:} bilevel optimization; stochastic first-order oracles; minimax complexity; higher-order smoothness; randomized lower bounds.\par

\begin{samepage}
\noindent\textbf{AI Usage.}
Nearly the entire research pipeline for this paper was carried out by Colombo, our laboratory’s internal auto-research system, powered by GPT-5.6 Sol. The system also conducted a Lean-backed article audit of the resulting manuscript. The authors subsequently reviewed and approved the mathematical claims, presentation, and formal artifacts, and take responsibility for the final manuscript. The complete Lean audit report and the system’s technical report will be made public at a later date.  \par
\end{samepage}

\paragraph{Publication Intent and Recognition of Independent Work.} 
This preprint is released as a research output of our study of
automated scientific research, for public scrutiny and discussion.
As we do not intend to submit this manuscript to a journal or
conference, it has not undergone the full editorial refinement
and internal review we would normally require for submission.
The authors remain responsible for its contents and for correcting
any errors or inaccurate attribution.

We recognize that releasing these results may affect researchers
who have independently been pursuing the same questions, including
work that is not yet public.
If this applies to your work, we invite you to contact us at
\texttt{colombo.2026@outlook.com}.
A brief description of the overlap, together with any dated
materials you are comfortable sharing, would help us understand
its scope and chronology.
Such materials might include draft excerpts, presentation slides,
or revision records; a complete manuscript or unpublished proof
is not required to begin the conversation.

We will review these cases carefully and update citations,
acknowledgments, and accounts of independent or concurrent work
as appropriate, including revising priority claims where warranted.
Nonpublic materials will be treated confidentially and used only
to assess the overlap unless their authors authorize otherwise.
We will obtain explicit permission from the authors before
publicly discussing such materials or providing them to an
automated research system.

\section{Introduction}\label{sec:introduction}
We study the sample complexity of nonconvex bilevel objectives with a strongly convex lower problem:
\begin{equation}\label{eq:bilevel}
 \Phi(x)=f(x,y^*(x)),\qquad y^*(x)=\argmin_y g(x,y).
\end{equation}
Here $x\in\mathbb R^{d_x}$ and $y\in\mathbb R^{d_y}$ are the upper and lower decision variables, $f$ and $g$ are the upper and lower objectives, and $y^*(x)$ is the unique lower minimizer; $\Phi$ is the resulting upper objective. The objective $\Phi$ need not be convex. We seek a point whose expected gradient norm is at most a target accuracy $\eps>0$ and count the gradient samples needed to find it. Even when $g(x,\cdot)$ is strongly convex, observing gradients of $f$ and $g$ is not the same as observing $\nabla\Phi$: the latter depends on the sensitivity of the lower minimizer. Fully first-order methods address this distinction without receiving Hessians or Hessian--vector products. 

Penalty methods make this access model algorithmically useful. The original F$^2$SA method gives a fully stochastic $\widetilde O(\eps^{-7})$ guarantee in the gradient-norm convention \citep{kwon2023}, and subsequent methods achieve exponent six \citep{kwon2024,chen2025}. For an integer smoothness order $p\ge1$, F$^2$SA-$p$ exploits higher lower-variable smoothness through finite differences of perturbed lower problems, obtaining $\widetilde O(p\eps^{-4-2/p})$ \citep{chen2026}. Finite-difference adjoint methods offer another route to sensitivity estimation, but their faster stochastic guarantees can rely on samplewise regularity and reused randomness \citep{yang2023fde}. These access differences matter for a minimax comparison: population smoothness alone does not give cancellation between independently sampled gradients.

There are two different complexity questions. At a \emph{fixed} finite smoothness order $p$, existing upper bounds have exponent $4+2/p$, while a separable single-level embedding gives only the lower exponent four \citep{chen2026,arjevani2023}. When $p$ grows with accuracy, even a matching fixed-order exponent is insufficient: its hidden constants may grow arbitrarily with $p$. An order-independent normalization of the derivative sequence is needed.

We resolve both questions for fresh, globally unbiased gradient samples of bounded variance. An algorithm may query anywhere and use arbitrary computation and randomization, but a sampled loss cannot be re-evaluated at a different point. The target is $\E\norm{\nabla\Phi(\widehat x)}\le\eps$, where $\widehat x$ is the algorithm output and the expectation is over oracle and algorithm randomness, under a worst-case fixed sample budget. For arbitrary positive fixed finite regularity budgets, we prove $\Theta_p(\eps^{-4-2/p})$, including the baseline $\Theta(\eps^{-6})$. Let $L_j$ denote the prescribed Lipschitz budgets for lower-variable derivatives of the gradient fields, with the precise indexing in Section~\ref{sec:model}. Under the common envelope $L_j=M\Lambda^j(j!)^\beta$ for integers $j\ge3$, where $M,\Lambda>0$ and $\beta\ge0$ are fixed independently of order and accuracy, we further obtain
\begin{equation}\label{eq:intro-law}
 \Theta\!\left(\eps^{-4}\D_{p,\beta}(\eps)^2\right),
 \qquad \D_{p,\beta}(\eps)=\inf_{1\le r\le p}r^\beta\eps^{-1/r},
\end{equation}
where $\D_{p,\beta}(\eps)$ is the optimized differentiation factor, $r$ ranges over positive integers, and $p=\infty$ removes the upper restriction on $r$. The constants are uniform in finite or infinite $p$. A fixed analytic envelope ($\beta=1$) leaves a necessary and sufficient $\log^2(1/\eps)$ overhead. An infinite exponential envelope ($\beta=0$, $p=\infty$) removes this overhead, whereas qualitative analyticity in the lower variable alone still permits exponent six.

\paragraph{Lower-bound contribution.}
A long chain accumulates descent over many coordinates, but placing its entire value inside a narrow lower perturbation jeopardizes global regularity. A naive quadratic lift also lets distant lower-variable queries amplify the hidden gradient. We separate extensive descent from a bounded frontier, embedding only the frontier through a sine. This keeps the full hidden gradient globally small and permits simultaneous control of all prescribed derivatives. A deferred-prefix random-rotation argument covers unrestricted algorithms. The lower problem is scalar, and exact upper gradients already suffice for every lower bound.

\paragraph{Upper-bound contribution.}
Chebyshev gradient differences and a projected stochastic adjoint attain~\eqref{eq:intro-law}. Polynomial differentiation and projected adjoints are established tools \citep{fornberg1988,yang2023fde}. Here, order-controlled differentiation weights, independent batches allocated across nodes, and geometric batches across contraction steps yield the uniform bound with every sample charged. Neither a multiplicative stencil-size factor nor an inner-loop logarithm is hidden in the result.

Our numerical study separates the two theoretical mechanisms. Budget-dependent sine-lift instances keep all tested methods above the certified expected-error barrier, whereas a fixed short chain permits eventual progress when the budget increases. A separate fixed collection tests the sample-cost growth of Ours, F$^2$SA-$p$, and Kwon's method under a common error criterion. These experiments use practical schedules specified in Appendix~\ref{app:experiments} and illustrate the two mechanisms separately.

This work was developed independently, and an initial manuscript was completed on September 18, 2026. Two concurrent preprints address closely related questions. Gu, Wu, and Yang \citep{gu2026firstorderlower} (first submitted to arXiv on September 18, 2026) establish a stochastic first-order lower bound of $\Omega(\Delta\sigma^2\kappa_y^8\epsilon^{-6})$ in the noise-dominated regime, identifying the optimal baseline exponent in $\epsilon$ and an explicit dependence on the lower-level condition number. Pan and Yang \citep{pan2026optimalbilevel} (first submitted to arXiv on October 1, 2026) develop a single-loop first-order method and establish matching complexity bounds of order $\epsilon^{-4-2/p}$ for each fixed finite smoothness order $p$. Their fixed-order complexity exponent coincides with ours, and their lower-bound construction also uses a scalar lower-level variable with sinusoidal coupling. Beyond this shared fixed-order characterization, our main additional contribution is a minimax law that is uniform in the smoothness order under prescribed derivative-growth envelopes, with constants independent of $p$. Our method attains this law through order-controlled gradient differences and weighted independent sampling, allowing the differentiation order to increase with accuracy. The resulting characterization also determines the infinite-order limits: $\Theta(\epsilon^{-4})$ under an exponential derivative envelope and $\Theta(\epsilon^{-4}\log^2(1/\epsilon))$ under an analytic envelope. We further distinguish these quantitative guarantees from qualitative analyticity alone, which still permits minimax complexity $\Theta(\epsilon^{-6})$.\footnote{Posting our manuscript to arXiv has been delayed by submission limits affecting the lead authors. The manuscript is available through our \href{https://www.overleaf.com/read/mhyxqwzrtnyx\#90cfd6}{Overleaf project}. Readers wishing to verify its development timeline are welcome to contact the authors by email to request reviewer access and the relevant dated revision records if these are unavailable through the public read-only link.}

\subsection{Related work}\label{sec:related}
\paragraph{Fully first-order upper bounds.}
\citet{kwon2023} introduced F$^2$SA. Its fully stochastic $\widetilde O(\varepsilon^{-7/2})$ bound uses a squared-gradient tolerance; setting $\varepsilon=\eps^2$ gives $\widetilde O(\eps^{-7})$ for our target. Later upper bounds attain exponent six \citep{kwon2024,chen2025}. These works also impose different regularity or study other noise regimes. For example, the full Hessian-Lipschitz assumption of \citet[Assumption 3.1]{chen2025} is stronger than our two block conditions. The matching claim here concerns the two-block regularity class defined in Section~\ref{sec:model}.

\paragraph{The closest finite-order comparison.}
\citet{chen2026} interpret penalty gradients as finite differences and develop F$^2$SA-$p$. Their Theorem~3.1 gives, with its other parameters fixed, $O(p\eps^{-4-2/p}\log(C/\eps))$, where $C>0$ depends on the fixed problem parameters, customarily written $\widetilde O(p\eps^{-4-2/p})$, with lower exponent four in Theorem~4.1. The logarithm comes from the inner accuracy schedule in that guarantee; it is not a lower bound on the algorithm\textquotesingle s actual cost. Corollary~\ref{thm:fixed} supplies the matching exponent for the lower-variable framework defined in Section~\ref{sec:model}. The uniform theorem additionally controls derivative growth and the residual logarithms. In their notation, $\bar L=\max_{0\le j\le p}L_j$ is the largest prescribed regularity bound and $\kappa=\bar L/\mu$ is the condition number, with $\mu>0$ the lower strong-convexity modulus. These quantities generally grow after inserting a factorial envelope, making their order dependence relevant to this comparison. Their nearly single-level high-smoothness upper bound is consistent with our exponential-envelope endpoint. The new distinction is the exact normalization-sensitive law and its matching lower bound.

\paragraph{Adjoints, numerical differences, and sample regularity.}
FdeHBO already combines finite-difference Hessian/Jacobian-vector approximations with a projected adjoint \citep{yang2023fde}. Its paired differences and momentum estimators reuse randomness, and its assumptions include samplewise smoothness and stochastic Hessian bounds. Our architecture is related, but our estimates use independently sampled population gradients without those assumptions. The new estimates concern uniform weight norms and fully counted allocations. Polynomial differentiation is classical \citep{fornberg1988}, and the projected-adjoint architecture is already present in FdeHBO.

\begin{table}[t]
\centering\small
\setlength{\tabcolsep}{4pt}
\caption{Selected sample-complexity statements for expected gradient norm at most $\eps$. The column $p$ records the lower-variable smoothness order; $p=1$ denotes the baseline order, and a dash indicates that no order parameter is studied. Oracle and regularity qualifications differ across rows.}\label{tab:literature}
\begin{tabularx}{\textwidth}{@{}p{0.19\textwidth}p{0.10\textwidth}p{0.30\textwidth}Y@{}}
\toprule
Work & Order $p$ & Complexity & Essential qualification\\
\midrule
\citet{kwon2023} & $1$ & Upper $\widetilde O(\eps^{-7})$ & Original F$^2$SA; additional regularity.\\[3pt]
\citet{kwon2024} & $1$ & Upper $O(\eps^{-6})$; lower $\Omega(\eps^{-6})$ & Standard-gradient upper; locally unbiased $y^*$-aware lower.\\[3pt]
\citet{chen2025} & $1$ & Upper $\widetilde O(\eps^{-6})$ & Fully stochastic branch; full Hessian smoothness.\\[3pt]
\citet{chen2026} & $p\in\N$ & Upper $\widetilde O(p\eps^{-4-2/p})$; lower $\Omega(\eps^{-4})$ & Explicit node factor $p$; other constants depend on derivative budgets and order.\\[3pt]
\citet{chenji2026} & --- & SFO lower $\Omega(\eps^{-4})$ & Fixed condition number; constrained zero-respecting class.\\[3pt]
This work, fixed list & $p\in\N$ & $\Theta_p(\eps^{-4-2/p})$ & Fixed order and arbitrary finite positive budgets; unrestricted algorithms.\\[3pt]
This work, envelope & $\N\cup\{\infty\}$ & $\Theta(\eps^{-4}\D_{p,\beta}(\eps)^2)$ & Constants uniform in $p$ under the common derivative envelope.\\
\bottomrule
\end{tabularx}
\end{table}

\paragraph{Lower bounds and other interfaces.}
The $\eps^{-6}$ lower bound of \citet{kwon2024} permits gradients that are unbiased only near $y^*(x)$, while also providing an approximate lower minimizer. Our lower construction instead maintains global unbiasedness at arbitrary queries through a globally bounded full-gradient perturbation. Condition-number bounds instead sharpen another axis \citep{ji2026,chenji2026}; the latter's revised SFO result has stochastic term $\Omega(\kappa_y^{9/2}\eps^{-4})$, where $\kappa_y$ is the lower-level condition number, for constrained, zero-respecting algorithms. Our characterization fixes these structural parameters and varies accuracy and lower-variable regularity. SPABA uses variance reduction and a different derivative/sampling interface \citep{chu2024}. The recent Stoc-SGHA method obtains stochastic exponent six, and exponent four with additional lower-level stochastic smoothness; its expectation guarantees also assume bounded iterates \citep{gu2026}. Robust chains, random rotations, and Bernoulli revelation are established lower-bound ingredients \citep{carmon2020,arjevani2023}; our contribution is their globally regular bilevel realization and order-uniform scaling. Appendix~\ref{app:comparison} records the detailed interfaces and version-specific comparisons.

\paragraph{Organization.}
Section~\ref{sec:model} defines the function classes and oracle. Section~\ref{sec:results} states the minimax characterization. Sections~\ref{sec:upper-overview} and~\ref{sec:lower-overview} explain the two proof mechanisms. Section~\ref{sec:experiments} presents the numerical illustrations, followed by the consequences and limitations in Section~\ref{sec:discussion}. The appendices provide the proofs and experimental specifications.

\section{Model and Quantitative Smoothness}\label{sec:model}
Let $\N=\{1,2,\ldots\}$. All vector norms are Euclidean, and matrix and multilinear tensor norms are induced operator norms. Write $g_{xy}=D_y\nabla_xg$ and $g_{yy}=D_y\nabla_yg$. The notation $\Lip_y(T)$ means Lipschitzness in $y$, uniformly over $x$. The upper variable may be nonconvex. Fix
\begin{equation}\label{eq:base-budgets}
 \mu,L_0,L_2,\Delta,\sigma>0,\qquad L_1>\mu,\qquad R_0\ge0.
\end{equation}
\begin{assumption}[Baseline population regularity]\label{ass:population}
Globally on $\R^{d_x}\times\R^{d_y}$, $g(x,\cdot)$ is $\mu$-strongly convex, $f(x,\cdot)$ is $L_0$-Lipschitz, and both full gradients are jointly $L_1$-Lipschitz. The blocks $g_{xy},g_{yy}$ exist and are jointly $L_2$-Lipschitz. At the supplied starting pair,
\[
 \Phi(x_0)-\inf\Phi\le\Delta,\qquad \norm{y_0-y^*(x_0)}\le R_0.
\]
\end{assumption}
For a fixed integer $p\ge1$, the additional finite-order assumptions are
\begin{equation}\label{eq:high}
 \Lip_y(D_y^q\nabla f)\le L_{q+1},\qquad
 \Lip_y(D_y^{q+1}\nabla g)\le L_{q+2},\qquad 1\le q\le p-1.
\end{equation}
The list is empty when $p=1$. Its bounds concern lower-variable derivatives, not arbitrary high derivatives in $x$. The stated derivatives exist, and Lipschitz top derivatives are interpreted through finite $C^{k,1}$ Taylor remainders; an extra continuous derivative beyond the list is not required. Strengthening the first inequality in~\eqref{eq:high} to joint Lipschitzness does not change our conclusions: the upper proof needs only the displayed bounds, and the hard instances have constant $f_y$ and vanishing positive-order lower derivatives of $\nabla f$.

\begin{definition}[Uniform factorial-power envelope]\label{def:profile}
Fix $M,\Lambda>0$ and $\beta\ge0$, independently of both order and accuracy, and set
\begin{equation}\label{eq:profile}
 L_j=M\Lambda^j(j!)^\beta,\qquad j\ge3.
\end{equation}
For $p\in\N\cup\{\infty\}$, let $\cF_{p,\beta}$ satisfy Assumption~\ref{ass:population} and~\eqref{eq:high} with~\eqref{eq:profile}. For $p=\infty$, all positive integers $q$ are included. The baseline constant $L_2$ remains separately fixed.
\end{definition}
An infinite envelope with $\beta=1$ is quantitatively analytic in $y$: dividing its derivative bounds by the Taylor factorial yields a common local convergence radius. For $\beta>1$ the envelope is of Gevrey type; $0\le\beta<1$ is a stronger factorial-growth restriction. These labels always refer to the derivative fields in~\eqref{eq:high} and do not impose joint analyticity in $(x,y)$.

\begin{assumption}[Fresh first-order observations]\label{ass:oracle}
At an adaptively chosen $(x,y)$, a call selects $a\in\{f,g\}$ and returns a full gradient sample $G_a$ with, conditionally on the past and the query,
\begin{equation}\label{eq:oracle-model}
 \E G_a=\nabla a(x,y),\qquad
 \E\norm{G_a-\nabla a(x,y)}^2\le\sigma^2.
\end{equation}
Independent fresh copies are available; a sample is evaluated only once. No function values, Hessians, Hessian--vector products, exact lower solutions, or exact hypergradients are supplied. Individual sample losses need not have uniform smoothness or convexity. Every sample in every batch is charged.
\end{assumption}
The guarantees in~\eqref{eq:oracle-model} hold on the \emph{entire} domain. Receiving both full gradients costs two calls. Let $Q^*_{p,\beta}(\eps)$ be the least integer $N$ for which an algorithm, informed of the budgets, order, and dimensions, uses at most $N$ samples on \emph{every} execution and satisfies
\begin{equation}\label{eq:target}
 \sup_{d_x,d_y}\ \sup_{(f,g,\cO)\in\cF_{p,\beta}\times\text{admissible oracles}}
 \E\norm{\nabla\Phi(\widehat x)}\le\eps.
\end{equation}
The second supremum means that the oracle obeys~\eqref{eq:oracle-model} for the chosen functions. Computation, memory, internal randomness, and output points are unrestricted. Dimensions may grow with accuracy. The criterion uses expected \emph{norm} and a \emph{fixed} sample budget; squared-norm tolerances and expected stopping costs define different guarantees.

\section{Main Results: Finite Order to Analytic Regularity}\label{sec:results}
Define the order--accuracy tradeoff
\begin{equation}\label{eq:Ddefinition}
 \D_{p,\beta}(\eps)=\inf_{\substack{r\in\N,\ 1\le r\le p}}r^\beta\eps^{-1/r}.
\end{equation}
When $p=\infty$ the upper restriction is omitted. For $\beta=0,p=\infty$, the infimum equals one and is not attained by a finite order.

\begin{theorem}[Uniform-order minimax complexity]\label{thm:uniform}
Under~\eqref{eq:base-budgets} and Definition~\ref{def:profile}, there exist $c,C,\eps_0>0$, depending only on the fixed budgets and $(M,\Lambda,\beta)$, such that \emph{simultaneously} for every $p\in\N\cup\{\infty\}$ and $0<\eps<\eps_0$,
\begin{equation}\label{eq:uniform-law}
 c\eps^{-4}\D_{p,\beta}(\eps)^2\le Q^*_{p,\beta}(\eps)
 \le C\eps^{-4}\D_{p,\beta}(\eps)^2.
\end{equation}
The constants and accuracy threshold are independent of $p$ and dimension. The lower bound holds against unrestricted adaptive randomized algorithms, already with $d_y=1$, exact $f$ gradients, and a globally unbiased $g$ oracle that samples the gradients of two fixed smooth losses. Its initial inner error is zero, and
\[
 d_x=O\!\left(\eps^{-2}\log(1/\eps)\right)
\]
suffices, with an implied constant independent of $p$.
\end{theorem}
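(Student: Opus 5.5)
The proof has two independent halves, an upper and a lower bound, linked only through the scalar $\D_{p,\beta}(\eps)$. First I would record elementary facts about $\D_{p,\beta}$. The minimizing order satisfies $r_*\asymp \log(1/\eps)/\log\log(1/\eps)$ when $\beta>0$, and $r_*\to\infty$ when $\beta=0$. In all cases $\D_{p,\beta}(\eps)\ge 1$ and $\D_{p,\beta}(\eps)\le \eps^{-1}$. Also, for the optimizing $r$, one has $r^\beta\eps^{-1/r}\le \D_{p,\beta}(\eps)\cdot O(1)$ uniformly in $p$. Both halves are then reduced to statements of the form ``cost $\lesssim \eps^{-4}(r^\beta \eps^{-1/r})^2$ for each admissible $r$'' and ``cost $\gtrsim \eps^{-4}(r^\beta\eps^{-1/r})^2$ for some hard $r$'', respectively.

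\textbf{Upper bound.} For a chosen order $r\le p$, I would estimate the hypergradient
\[
\nabla\Phi=\nabla_x f-g_{xy}g_{yy}^{-1}\nabla_y f
\]
by an outer SGD-type loop of $T\asymp \Delta\eps^{-2}$ steps with hypergradient batches of error variance $\lesssim\eps^2$ and bias $\lesssim\eps$. The inner problems are handled as follows.
\begin{enumerate}
\item The lower solution $y^*(x)$ is tracked by warm-started SGD on $g$, using geometric batch sizes across contraction steps. This keeps the cost per outer step $O(\eps^{-2})$ without a logarithm, since the geometric sum is dominated by its last term.
\item The adjoint $v=g_{yy}^{-1}\nabla_y f$ is computed by a projected stochastic fixed-point iteration.
\item The Jacobian-vector products $g_{xy}v$ and $g_{yy}v$ are obtained as $h^{-1}\sum_k w_k\nabla g(x,y+h t_k v)$ with Chebyshev nodes $t_k$ and polynomial-differentiation weights $w_k$ of order $r$.
\end{enumerate}
The key estimates are twofold. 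The Taylor bias is $\lesssim L_{r+1}h^{r}/r!\cdot(\text{weight factor})$, which under the envelope is $\lesssim M(C\Lambda h)^r (r!)^{\beta-1}\cdot$. The weighted variance is $\sigma^2 h^{-2}\sum_k w_k^2/n_k$; allocating $n_k\propto|w_k|$ gives $\sigma^2h^{-2}\|w\|_1^2/n$, with $\|w\|_1\le C^r$. Choosing $h\asymp \eps^{1/r}/(C r^{\beta})$ equalizes these terms, so each step costs $\eps^{-2}h^{-2}\asymp \eps^{-2}r^{2\beta}\eps^{-2/r}$ up to constants independent of $r$. Summing over the $T$ steps and minimizing over $r$ yields the bound. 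The delicate point is ensuring that the geometric constants $C^r$ are absorbed into $\eps^{-1/r}$ scaling, i.e., only change $\Lambda$. This requires proving $\|w\|_1\le C^r$ for a Chebyshev-type stencil uniformly in $r$.

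\textbf{Lower bound.} The hard instance is parametrized by a hard order $r$ and scale $\lambda$. I take a Carmon/Arjevani robust zero-chain $F$ on $d_x=T$ coordinates with $T\asymp\Delta\eps^{-2}/\ell$ and a scalar lower variable, and set
\[
g(x,y)=\tfrac\mu2 y^2 - y\,\lambda\sin(\text{frontier}(x)/\lambda)\cdot(\cdots).
\]
The chain is split into descent and a bounded frontier term; only the frontier progress is hidden in a lower-level perturbation of amplitude $\lambda$ and frequency $1/\lambda$ in $y$. The $j$-th $y$-derivative bound of the instance is then $\lesssim\lambda^{1-j}$. Requiring $\lambda^{1-j}\le M\Lambda^j(j!)^\beta$ for all $j\le p$ forces $\lambda\gtrsim\min_{r\le p}(\cdot)$, and this is exactly where $\D$ appears dually. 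The $g$-oracle is the mixture of two fixed smooth losses, a Bernoulli revelation with probability $\rho\asymp\lambda^2$ in the gradient-signal sense, so variance $\sigma^2$ allows only $\rho$-rate discovery of the next coordinate. Each new coordinate then costs $\gtrsim 1/\rho$ queries, and the total is $T/\rho\asymp\eps^{-4}\D^2$. A random rotation with deferred prefix handles arbitrary randomized algorithms, with $d_x=O(T\log T)$ giving the stated $O(\eps^{-2}\log(1/\eps))$.

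The main obstacle is the global regularity of the sine lift. All derivatives $j\le p$ (infinitely many when $p=\infty$) and the baseline $L_1,L_2$ budgets must hold simultaneously and globally while $\|\nabla\Phi\|\ge\eps$ persists before discovery. I would handle this by bounding the lifted hidden gradient uniformly through the bounded frontier, so that distant $y$ queries cannot amplify it.
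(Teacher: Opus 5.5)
Your upper-bound architecture (geometric inner batches, projected adjoint, Chebyshev gradient differences with $|w_k|$-proportional node batches) is the paper's. There is, however, a genuine gap exactly at the point you call delicate: a geometric factor in the stencil norm cannot be absorbed into $\Lambda$. Absorption works only where the factor multiplies $h^r$, i.e.\ in the bias, since there it merely rescales $h$. But $\norm{w}_1$ also enters the variance. With bias budget $a\asymp\eps$ and Taylor constant $A_r\asymp(C\Lambda)^{r}((r+1)!)^{\beta-1}$, the radius is $h=(a/(\norm{w}_1A_r))^{1/r}$. The per-step cost is $\eps^{-2}W^2$ with $W=\norm{w}_1/h=\norm{w}_1^{1+1/r}A_r^{1/r}a^{-1/r}\asymp\norm{w}_1^{1+1/r}r^{\beta-1}a^{-1/r}$. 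Your cost $\eps^{-2}h^{-2}$ omits $\norm{w}_1^2$, and your radius $h\asymp\eps^{1/r}/r^\beta$ is not what your own bias estimate gives, which is $h\asymp\eps^{1/r}r^{1-\beta}$. The two slips cancel only if $\norm{w}_1\asymp r$. Knowing only $\norm{w}_1\le C^r$ is harmless for each fixed $p$. But optimizing $C^r\eps^{-1/r}$ over $r$ then leaves an overhead $e^{\Theta(\sqrt{\log(1/\eps)})}$, larger than every power of $\log(1/\eps)$. So the uniform law, the $\log^{2\beta}(1/\eps)$ regime, and the $\Theta(\eps^{-4})$ endpoint at $\beta=0$, $p=\infty$ are all lost. (Your optimal order is also off: minimizing $\beta\log r+\log(1/\eps)/r$ gives $r\asymp\log(1/\eps)/\beta$, where $C^r$ is a power of $1/\eps$.) What you need is $\norm{w}_1=O(r)$; even the generic Lebesgue-constant bound $O(r\log r)$ would leave a spurious $(\log\log(1/\eps))^2$. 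The paper gets the exact identity $\sum_j|d_j|=n/h$ from the Chebyshev extrema $t_j=\cos(j\pi/n)$ with the smallest odd $n\ge r$, so no $t_j$ vanishes. Since $d_j\propto(-1)^j/(c_jt_j^2)$, all products $d_jT_n(t_j)=d_j(-1)^j$ share one sign. Exactness on $T_n(t/h)$ then gives $W=|T_n'(0)|/h=n/h$, and $((r+1)!)^{1/r}\asymp r$ yields $W\le C_\beta r^\beta a^{-1/r}$ with no geometric factor.

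The lower-bound architecture (split chain with bounded frontier, sine lift, Bernoulli revelation, deferred-prefix rotation) also matches, but the amplitudes you wrote do not produce the rate. As displayed, your $g$ puts the sine on the frontier in $x$ and is linear in $y$. Its hidden $x$-gradient then grows like $|y|$, so distant lower queries amplify the signal and no fixed revelation probability keeps the variance below $\sigma^2$ globally; moreover the high-order budgets never bind. The sine must act on $y$, with the bounded frontier as coefficient and a quadratic coefficient $m>\mu$ so that $g_{yy}$ stays above $\mu$. This is the paper's $g_U=\frac m2y^2-m\frac{A}{b}h\,\mathcal B_U(x/\lambda)\sin(y/h)$, where $h$ is the width you call $\lambda$ and the paper's $\lambda=s/\tau$ is the upper-variable scale. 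Crucially, the amplitude carries the accuracy: $\sup_{x,y}\norm{v_U}\le C_*sh$ and $\norm{D_y^{k+1}\nabla g_U}\lesssim sh^{-k}$ with $s\asymp\eps$. This factor $s$ turns the envelope into $sh^{-k}\le(k^\beta/K_0)^k$ for all $k\le p$, making the smallest admissible width $h\asymp1/\D_{p,\beta}(\eps)$. Your constraint $\lambda^{1-j}\le M\Lambda^j(j!)^\beta$ contains no $\eps$ and yields only a constant width. Likewise the revelation probability must be $q_s\asymp\eps^2h^2/\sigma^2$; with your $\rho\asymp\lambda^2$ and $T\asymp\eps^{-2}$, the count is only $T/\rho\asymp\eps^{-2}\D_{p,\beta}(\eps)^2$. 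Finally, since the lift is nonlinear in $y$, the inner root is $y_U^*=(A/b)\theta_c(\mathcal B_U(x/\lambda))$ rather than linear in the frontier. You must check $\theta_c'=1+O(c^2)$ uniformly in $p$ so that the barrier $\norm{\nabla\Phi_U}\ge s/4$ survives.
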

The upper half is proved in Appendices~\ref{app:differencing}--\ref{app:upper}; the lower half in Appendices~\ref{sec:split}--\ref{sec:random}. Section~\ref{sec:lower-overview} explains why globally valid gradients do not destroy the information bottleneck.

For comparison with finite-order assumptions not chosen from an envelope, denote by $Q_p^*(\eps;L_3,\ldots,L_{p+1})$ the complexity under an arbitrary fixed positive list in~\eqref{eq:high} and the same baseline budgets.
\begin{corollary}[Arbitrary fixed finite budgets]\label{thm:fixed}\label{thm:main}
For every fixed $p\ge1$ and every such finite positive list,
\begin{equation}\label{eq:fixed-law}
 Q_p^*(\eps;L_3,\ldots,L_{p+1})=\Theta_p(\eps^{-4-2/p})
\end{equation}
for all sufficiently small $\eps$. Constants and the threshold in this statement may depend on that fixed list and on $p$. The same rate holds if all $f$ gradients are exact.
\end{corollary}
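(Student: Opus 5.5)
The plan is to deduce the corollary from Theorem~\ref{thm:uniform} by a sandwich argument in both directions, rather than reprove anything. Given a fixed $p$ and a fixed positive list $L_3,\ldots,L_{p+1}$, choose an envelope with $\beta=0$, say, and $M,\Lambda$ as follows.

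\textbf{Upper bound.} Take $\Lambda=1$ and $M\ge\max_{3\le j\le p+1}L_j$. Then the fixed-list class is contained in $\cF_{p,0}$ restricted to orders $\le p$, since only the conditions in~\eqref{eq:high} with $q\le p-1$ are imposed and each prescribed bound is at most $M\Lambda^j$. Since enlarging the class can only increase the minimax budget, $Q_p^*(\eps;L)\le Q^*_{p,0}(\eps)\le C\eps^{-4}\D_{p,0}(\eps)^2$. For $\eps<1$, the function $r\mapsto\eps^{-1/r}$ is decreasing, so $\D_{p,0}(\eps)=\eps^{-1/p}$, which gives $O_p(\eps^{-4-2/p})$. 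The constant $C$ depends on $M$ and hence on the list, which is allowed.

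\textbf{Lower bound.} This is the delicate direction: the hard instances must satisfy the \emph{given} small budgets $L_j$, whereas Theorem~\ref{thm:uniform} certifies only envelope budgets. The first option is to take $\beta=0$, $\Lambda=1$ and $M=\min_j L_j>0$. Every instance in $\cF_{p,0}$ then has derivative Lipschitz constants at most $M\le L_j$ for $3\le j\le p+1$, so it belongs to the fixed-list class. Hence $Q_p^*(\eps;L)\ge Q^*_{p,0}(\eps)\ge c\eps^{-4-2/p}$ for $\eps<\eps_0$, with $c$ and $\eps_0$ depending on $M$ and thus on the list.

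Two subtleties need checking here. First, the class $\cF_{p,0}$ for finite $p$ imposes conditions only up to order $p$, which matches the fixed-list indexing exactly. Second, the baseline budgets $L_2$ and the others in~\eqref{eq:base-budgets} are shared, so containment holds. If instead the envelope were required to dominate the list in the other direction, one would use $\Lambda$ small: $M\Lambda^j\le L_j$ holds trivially for finitely many $j$ once $M$ is small. Either choice works, because Theorem~\ref{thm:uniform} allows arbitrary positive $M,\Lambda$.

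\textbf{Exact $f$ gradients.} The theorem's lower bound already uses exact $f$ gradients. Restricting the oracle class only helps the algorithm, so the upper bound persists. The lower bound is inherited directly from the theorem's instances.

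The main obstacle is only bookkeeping: verifying that the envelope class with small $M$ sits inside the list class, including the conventions at the boundary index $j=p+1$ and the $p=1$ empty-list case. In that case $\D_{1,0}=\eps^{-1}$, which gives $\eps^{-6}$ and is consistent.
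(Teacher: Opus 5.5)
Your proposal is correct and matches the paper's proof: it sandwiches the fixed-list class between the $\beta=0$, $\Lambda=1$ envelope classes with constants $\min_j L_j$ and $\max_j L_j$, applies Theorem~\ref{thm:uniform} to both (using $\D_{p,0}(\eps)=\eps^{-1/p}$), and treats $p=1$ as the theorem itself. It also obtains the exact-$f$ version from the fact that the theorem's hard instances already use exact upper gradients.
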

This follows by sandwiching the list between two constant envelopes ($\beta=0,\Lambda=1$); Appendix~\ref{app:consequences} proves the class inclusions explicitly. It preserves the arbitrary-budget fixed-order result without substituting a growing $p$ into constants allowed to depend on $p$.

\begin{corollary}[Growing order and the infinite-order endpoints]\label{cor:infty}
Put $t=\log(1/\eps)$. For fixed $\beta>0$ and sufficiently small $\eps$, uniformly in $p$,
\begin{equation}\label{eq:phase}
 Q^*_{p,\beta}(\eps)=
 \begin{cases}
 \Theta_\beta(p^{2\beta}\eps^{-4-2/p}),&p\le t/\beta,\\
 \Theta_\beta(\eps^{-4}t^{2\beta}),&p\ge t/\beta.
 \end{cases}
\end{equation}
For $\beta=0$, $Q^*_{p,0}(\eps)=\Theta(\eps^{-4-2/p})$ uniformly over finite $p$, and $Q^*_{\infty,0}(\eps)=\Theta(\eps^{-4})$. In particular,
\begin{equation}\label{eq:analytic-law}
 Q^*_{\infty,1}(\eps)=\Theta\!\left(\eps^{-4}\log^2(1/\eps)\right).
\end{equation}
All these lower bounds remain valid with exact upper gradients.
\end{corollary}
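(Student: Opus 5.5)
Corollary~\ref{cor:infty} follows from Theorem~\ref{thm:uniform} once $\D_{p,\beta}(\eps)$ is evaluated up to constants that depend only on $\beta$. Since the constants in \eqref{eq:uniform-law} are independent of $p$, everything reduces to the elementary optimization of $h(r)=\beta\log r+t/r$ over integers $1\le r\le p$, with $t=\log(1/\eps)$. The final clause about exact upper gradients is inherited directly: Theorem~\ref{thm:uniform} already states that its lower bound holds with exact $f$ gradients.

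For $\beta>0$, treat $h$ as a function of a real variable $r>0$. We have $h'(r)=\beta/r-t/r^2$, so $h$ decreases on $(0,t/\beta]$ and increases afterwards, with minimizer $r_0=t/\beta$.

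\emph{Case $p\le t/\beta$.} Here $h$ is decreasing on $[1,p]$, so the infimum is attained at $r=p$. This gives $\D_{p,\beta}=p^\beta\eps^{-1/p}$, and squaring yields the first line of \eqref{eq:phase} exactly.

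\emph{Case $p\ge t/\beta$.} The integer minimizer is $\lfloor r_0\rfloor$ or $\lceil r_0\rceil$, both of which are admissible once $\eps<\eps_0$ is small enough that $r_0\ge1$. The ceiling is admissible because $\lceil r_0\rceil\le p$ whenever $p\ge r_0$ and $p$ is an integer. At any $r\in[r_0/2,2r_0]$ with $r\ge1$ we have $r^\beta\asymp_\beta t^\beta$ and $t/r\le 2\beta$, so $\eps^{-1/r}\le e^{2\beta}$. This gives the upper bound $\D\lesssim_\beta t^\beta$. For the lower bound, the real minimum satisfies $h\ge h(r_0)=\beta\log(t/\beta)+\beta$, so $\D\ge (et/\beta)^\beta\gtrsim_\beta t^\beta$. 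Hence $\D^2\asymp_\beta t^{2\beta}$, which is the second line.

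The two regimes agree at $p=t/\beta$, where $p^{2\beta}\eps^{-2/p}=(t/\beta)^{2\beta}e^{2\beta}$, so the case split is consistent. Setting $\beta=1$ and $p=\infty$ (which always falls in the second case) gives \eqref{eq:analytic-law}.

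For $\beta=0$ the quantity $\eps^{-1/r}$ is decreasing in $r$. For finite $p$ this gives $\D=\eps^{-1/p}$ exactly and hence $\Theta(\eps^{-4-2/p})$ with constants uniform in $p$. For $p=\infty$ the infimum equals $1$ (as noted after \eqref{eq:Ddefinition}), giving $\Theta(\eps^{-4})$.

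The only delicate point is the integer rounding near $r_0$ when $r_0$ is small. This is handled by taking $\eps_0$ small enough that $t/\beta\ge1$; since $\beta$ is fixed, any residual small-$r_0$ range is absorbed into the $\beta$-dependent constants.
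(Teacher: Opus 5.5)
Your proposal is correct and follows essentially the same route as the paper: reduce to Theorem~\ref{thm:uniform} and evaluate $\D_{p,\beta}(\eps)$ by minimizing $\beta\log r+t/r$. You use monotonicity on $[1,p]$ when $p\le t/\beta$; when $p\ge t/\beta$, you combine the continuous minimum $e^\beta(t/\beta)^\beta$ as a lower bound with the admissible integer $\lceil t/\beta\rceil$ for the upper bound, and you treat the $\beta=0$ cases and the exact-upper-gradient clause exactly as in Appendix~\ref{app:consequences}.
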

Indeed, $\beta\log r+t/r$ is minimized continuously at $r=t/\beta$, and integer rounding changes the optimum only by a factor depending on $\beta$. The algorithm always uses a finite $r=O_\beta(t)$, including at $p=\infty$.

Table~\ref{tab:regimes} in Appendix~\ref{app:consequences} summarizes the regimes. In particular, qualitative $C^\infty$ regularity, even entire dependence on $y$ without a common quantitative envelope, still has complexity $\Theta(\eps^{-6})$. Thus~\eqref{eq:analytic-law} concerns the specified derivative budgets, not every class called ``analytic.''

\section{An Order-Uniform, Fully Counted Upper Bound}\label{sec:upper-overview}
The upper method estimates the sensitivity term directly, rather than solving one perturbed inner problem for every difference node. Write
\begin{equation}\label{eq:main-hyper}
 \nabla\Phi(x)=f_x-g_{xy}v^*,\qquad v^*=g_{yy}^{-1}f_y,
\end{equation}
with all quantities evaluated at $(x,y^*(x))$. These derivatives define analysis objects; the algorithm receives only the gradients in Assumption~\ref{ass:oracle}.

\subsection{A differentiation rule with controlled weight norm}
Choose an available Taylor order $r\le p$, and the smallest odd $n\ge r$. At Chebyshev nodes $t_j=\cos(j\pi/n)$, let $c_0=c_n=2$ and $c_j=1$ otherwise. For a real query radius $h>0$, use
\begin{equation}\label{eq:main-weights}
 d_j=\frac{(-1)^{(n-1)/2+j}}{n h c_jt_j^2},\qquad
 \sum_{j=0}^n d_jP(ht_j)=P'(0),\qquad
 W:=\sum_{j=0}^n|d_j|=n/h,
\end{equation}
where $P$ has degree at most $n$. Odd $n$ ensures no denominator vanishes. Appendix~\ref{app:differencing} proves the identity by differentiating the Lagrange cardinal polynomials.

For $\norm v\le V:=\max\{1,L_0/\mu\}$, the difference
\[
 \mathcal Z_{r,h}(x,y;v)=\sum_jd_j\nabla g(x,y+ht_jv)
\]
approximates $(g_{xy}v,g_{yy}v)$. The envelope gives a degree-$r$ Taylor remainder bounded by $A_r|t|^{r+1}$, with
\[
 A_r=\bar M(\bar\Lambda V)^{r+1}((r+1)!)^{\beta-1},
\]
where $\bar M,\bar\Lambda$ depend only on the fixed budgets. Choosing $h=(a/(nA_r))^{1/r}$ gives bias at most $a$ and
\begin{equation}\label{eq:main-W}
 W\le C_\beta r^\beta a^{-1/r}.
\end{equation}
The Taylor factorial is essential to this order-uniform estimate.

Allocate $b_j=\lceil B|d_j|/W\rceil$ independent gradient samples at node $j$, and sum their means with weights $d_j$. Then
\begin{equation}\label{eq:main-node-cost}
 \E\norm{\widehat Z_B-\mathcal Z_{r,h}}^2\le\sigma^2W^2/B,
 \qquad \sum_j b_j\le B+n+1.
\end{equation}
The node means are summed deterministically; randomizing one node could add variance from unbounded gradient means. No sample is reused. Real lower queries are unrestricted, so $h$ may grow with the order when $\beta<1$.

\subsection{Inner solution, adjoint, and outer step}
At each outer point, a restarted batched SGD solve finds $Y$ with $\E\norm{Y-y^*(x)}=O(\eps)$. At $(x,Y)$, the adjoint $v^*=g_{yy}^{-1}f_y$ lies in the ball of radius $V$. With fresh samples at each step, update
\begin{equation}\label{eq:main-adjoint}
 v_{k+1}=\Pi_V\!\left(v_k-\alpha_v
       [ (\widehat Z_{B_k}(x,Y;v_k))_y-\overline G_{f,y,k} ]\right),
 \qquad \alpha_v=1/(2L_1).
\end{equation}
The mean update has an additive error at most $a=O(\eps)$ and noise variance at most $\sigma^2(1+W^2)/B_k$. Projection keeps every queried direction inside the region of the uniform bias bound. A contraction estimate gives an adjoint error $O(\eps)$.

Both stochastic solves use increasing batches. For a recursion $E_{k+1}\le qE_k+d+A/B_k$, geometric allocation proportional to $q^{(J-1-k)/2}$ yields
\begin{equation}\label{eq:main-batches}
 E_J\le u^2,\qquad
 \sum_{k<J}B_k\le J+\frac{4A}{u^2(1-\sqrt q)^2},
\end{equation}
provided the initial and bias terms fit the target. Thus the logarithmic contraction length contributes additively, not multiplicatively. The adjoint loop's extra $n+1$ node-rounding samples per step are also explicitly counted.

Finally form, using new independent batches,
\[
 \widehat q=\overline G_{f,x}(x,Y)
              -(\widehat Z_{B_o}(x,Y;\widetilde v))_x.
\]
Hypergradient stability and the three errors give
$\E[\norm{\widehat q-\nabla\Phi(x)}\mid\text{outer history}]\le\eps/8$.
Normalized outer steps of length $\eta=\eps/(2L_\Phi)$ bound the distance from $x_0$ deterministically, so restarts at $y_0$ have no hidden tracking assumption. With $K=O(\eps^{-2})$ steps and an independent uniform output index, the descent inequality gives expected gradient norm at most $3\eps/4$.

\begin{algorithm}[t]
\caption{Fresh-sample Chebyshev--adjoint method}\label{alg:uniform}
\begin{algorithmic}[1]
\Require Accuracy $\eps$, order budget $p$, envelope and baseline constants, $(x_0,y_0)$.
\State Choose finite $r\le p$, odd $n$, $h,d_j,W$, and all loop lengths and batch schedules as in Appendices~\ref{app:differencing}--\ref{app:upper}.
\For{$t=0,\ldots,K-1$}
  \State Restart SGD on $g(x_t,\cdot)$ at $y_0$, using the geometric batches; obtain $Y_t$.
  \State Initialize $v_0=0$.
  \For{$k=0,\ldots,J_v-1$}
    \State At each $(x_t,Y_t+ht_jv_k)$ take $\lceil B_k|d_j|/W\rceil$ fresh $g$ samples.
    \State Form $\widehat Z_{B_k}$; independently average $B_k$ fresh $f$ gradients at $(x_t,Y_t)$.
    \State Apply the projected update~\eqref{eq:main-adjoint}.
  \EndFor
  \State Form $\widehat q_t$ from new independent $f$ samples and a new $g$-difference at $v_{J_v}$.
  \State $x_{t+1}\gets x_t-\eta\,\dir(\widehat q_t)$, where $\dir(q)=q/\norm q$ for $q\ne0$ and $\dir(0)=0$.
\EndFor
\State \Return $x_I$ for an independent uniform $I\in\{0,\ldots,K-1\}$.
\end{algorithmic}
\end{algorithm}

\subsection{Choosing the order without hiding its cost}
For $\beta>0$, minimize $r^\beta\eps^{-1/r}$ over integers $r\le p$. A minimizing order is $O_\beta(\log(1/\eps))$. For $\beta=0$, use $r=\min\{p,\lceil\log(1/\eps)\rceil\}$, with the natural interpretation at $p=\infty$; this is within a factor $e$ of the infimum. Equation~\eqref{eq:main-W} gives $W\le C\D_{p,\beta}(\eps)$. The complete fixed budget is bounded by
\begin{equation}\label{eq:main-total}
 C\eps^{-4}(1+W^2)+C\eps^{-2}\log^2(1/\eps)
 \le C\eps^{-4}\D_{p,\beta}(\eps)^2.
\end{equation}
The additive second term counts stencil ceilings, contractions, and restarts. No infinite stencil is used, even for $p=\infty$.

\section{A Globally Valid Lower Bound Uniform in Order}\label{sec:lower-overview}
The derivative envelope dictates how narrow a globally hidden signal may be. Appendices~\ref{sec:split}--\ref{sec:random} give complete proofs of the construction below.

\subsection{Separating descent from information}
A chain has range $O(T)$, so localizing its entire value risks violating the derivative budgets. A quadratic lift has an additional unbounded signal $-m(y-H(x))\nabla H(x)$ in its $x$-gradient. We therefore construct $H_T=A_T+B_T$ on $\R^T$ with
\begin{align}\label{eq:main-chain-bounds}
 |A_T|&=O(T),&\norm{\nabla A_T}&=O(\sqrt T),&\norm{\nabla^2 A_T}&=O(1),\notag\\
 |B_T|&=O(1),&\norm{\nabla B_T}&=O(1),&\norm{\nabla^2 B_T}&=O(1).
\end{align}
If $k$ is the first coordinate with $|z_k|<1$, then $\partial_k H_T(z)\le-1$. When all coordinates after $k$ lie in a fixed small interval, the entire gradient of $A_T$ depends only on the first $k$ coordinates, whereas the value and gradient of $B_T$ need at most the first $k+1$. Thus the extensive term supports descent without an exact upper gradient exposing the next direction.

The bounded part has the prefix form
\[
 B_T(z)=-\sum_{i=1}^T\left(\prod_{j<i}a(z_j)\right)\ell(z_i),
\]
with the support of $\ell$ inside a flat-zero region of the gate $a$. At most one summand contributes to the value or derivatives at a point. The inequality $|a'|^2\le Ca(1-a)$ controls long product derivatives through the probability of exactly one failure in independent Bernoulli trials. 

Let $U$ be an orthonormal $T$-frame and $\rho(w)=w/\sqrt{1+\norm w^2/\mathcal R^2}$, where $\mathcal R=\Theta(\sqrt T)$. Define
\[
 \mathcal A_U(w)=A_T(U^\top\rho(w))+\tfrac{\eta_0}{2}\norm w^2,
 \qquad \mathcal B_U(w)=B_T(U^\top\rho(w)).
\]
The bounded map controls arbitrary upper-query norms, and the quadratic prevents remote stationary escape. The wrapped sum has gradient norm at least $1/2$ whenever a latent coordinate has magnitude below one. Its initial gap is $O(T)$, and the bounds needed from~\eqref{eq:main-chain-bounds} remain uniform.

\subsection{A sine lift that respects all required derivatives}
Choose a sufficiently large fixed $K_0$, put $\tau=K_0^{-3/2}$, and for a small scale $s$ set
\begin{equation}\label{eq:main-lower-scale}
 h=\frac{K_0}{\D_{p,\beta}(s)},\qquad
 \lambda=\frac{s}{\tau},\qquad A=\frac{s^2}{\tau},\qquad T=\Theta(s^{-2}).
\end{equation}
Here $A$ is a scalar amplitude, distinct from $A_T$; the $h$ in this construction is unrelated to the algorithm's query radius. With fixed $0<b\le L_0$ and $\mu<m<L_1$, use the scalar lower variable
\begin{equation}\label{eq:main-lift}
 f_U(x,y)=A\mathcal A_U(x/\lambda)+by,\qquad
 g_U(x,y)=\frac m2y^2-m\frac Abh\,\mathcal B_U(x/\lambda)\sin(y/h).
\end{equation}
All derivatives of sine and cosine are bounded on the real line. The width satisfies
\begin{equation}\label{eq:main-width-bound}
 K_0s\le h\le K_0,\qquad
 s h^{-k}\le(k^\beta/K_0)^k\quad(1\le k\le p).
\end{equation}
Combined with $k!\ge(k/e)^k$, this bounds \emph{every} prescribed lower-variable derivative by its envelope using a single $K_0$. Baseline Hessian-block estimates are also uniform, including $p=1$ and the nonshrinking width at $\beta=0,p=\infty$.

The sine lift has a nonlinear lower root. Put $c=A/(bh)$ and define $\theta_c(B)$ by
\[
 \theta_c(B)=B\cos(c\theta_c(B)).
\]
For small $s$, this is a contraction and $\theta_c'(B)=1+O(c^2)$ uniformly on the bounded frontier range. The \emph{actual} minimizer and hyper-objective are
\[
 y_U^*(x)=\frac Ab\theta_c(\mathcal B_U(x/\lambda)),\qquad
 \Phi_U(x)=A[\mathcal A_U(x/\lambda)+\theta_c(\mathcal B_U(x/\lambda))].
\]
Consequently an unresolved coordinate still forces $\norm{\nabla\Phi_U(x)}\ge s/4$. The same accuracy threshold works for every $p$. The initialization has exact lower solution zero and gap at most $\Delta$.

\subsection{A fixed oracle and an adaptive information bound}
Write $\nabla g_U=(0,my)+v_U$. The full hidden signal, including the scalar lower-gradient component, satisfies
\[
 \sup_{x,y}\norm{v_U(x,y)}\le C_*sh.
\]
Choose $q_s=C_*^2s^2h^2/\sigma^2$ and return
\begin{equation}\label{eq:main-rare}
 G_g(x,y;Z)=(0,my)+\frac Z{q_s}v_U(x,y),\qquad Z\sim\Bern(q_s),
\end{equation}
independently at every lower call; return exact upper gradients. This samples gradients of two fixed smooth losses and is globally unbiased with variance at most $\sigma^2$.

Draw $U$ uniformly once. A simulator exposes at most one additional column on each successful Bernoulli round. Its responses are gradients of truncated \emph{composite} functions: artificial zero coordinates are constants, so their gradient components are projected out. Conditional on the simulator's prefix and past, unseen columns remain uniform on an orthogonal complement. A spherical-cap bound and the bounded image of $\rho$ control all $N$ adaptive queries and the arbitrary final output. At dimension $O(T\log(NT))$, transcript coupling fails with probability at most $1/8$. This calculation is performed in the simulator's unconditional prefix filtration, not by incorrectly conditioning on earlier good events.

If $Nq_s\le T/8$, the chance of exposing all $T$ columns is also at most $1/8$. Hence, with probability at least $3/4$, the output has gradient norm at least $s/4$. Taking $s=16\eps$ violates~\eqref{eq:target}, while
\begin{equation}\label{eq:main-lower-count}
 \frac{T}{q_s}=\Theta(s^{-4}h^{-2})
               =\Theta(\eps^{-4}\D_{p,\beta}(\eps)^2).
\end{equation}
Uniformity in the final comparison follows from $a^{-1}\D_{p,\beta}(\eps)\le\D_{p,\beta}(a\eps)\le\D_{p,\beta}(\eps)$ for fixed $a\ge1$. Averaging over the frame produces one fixed admissible instance against any given randomized algorithm. 

\section{Numerical Illustrations}\label{sec:experiments}
We examine two different consequences of the theory. The first experiment instantiates the lower-bound construction and measures how several methods remain above its accuracy barrier. The second holds a finite collection of problems fixed and measures the growth of the sample cost as the target accuracy decreases. These experiments use $p\in\{2,4\}$ and independent gradient samples. They illustrate the information obstruction and finite-range scaling, respectively; the minimax assertions are established by the proofs.

\subsection{Common oracle accounting and output criterion}
One complete gradient sample of either $f$ or $g$ costs one query; requesting both costs two. Every inner update, difference node, and final hypergradient estimate is charged. For each noise trajectory, we evaluate the mean of $\norm{\nabla\Phi(x_t)}$ over the pre-update outer iterates. This is the conditional expected error of the independent uniform output in Algorithm~\ref{alg:uniform}. We then average over eight independent trajectories. Reference lower solutions and hypergradients are used only by the evaluator and are never supplied to the algorithms. Appendix~\ref{app:experiments} gives the constructions and schedules, distinguishing practical settings from theorem-certified parameters.

\subsection{Stalling on the lower-bound construction}
For each allowed budget $N\in\{10^8,3\cdot10^8,\ldots,3\cdot10^{15},10^{16}\}$, we construct the sine-lift instance in~\eqref{eq:main-lift}. All methods receive the same instance at that budget. With fixed public regularity budgets, we maximize the admissible scale $s_N$ within the specified construction subject to $8Nq_{s_N}\le T_N$. Set $\eps_N=s_N/16$. The lower-bound argument gives
\[
 \E\norm{\nabla\Phi(\widehat x_N)}\ge 3s_N/16=3\eps_N,
 \qquad \norm{\nabla\Phi(x_0)}=s_N=16\eps_N.
\]
We compare Algorithm~\ref{alg:uniform} (Ours), F$^2$SA-$p$, its first-order specialization F$^2$SA-1~\citep{chen2026}, and the standard-gradient method of \citet{kwon2024}.

\begin{figure}[t]
 \centering
 \includegraphics[width=\textwidth]{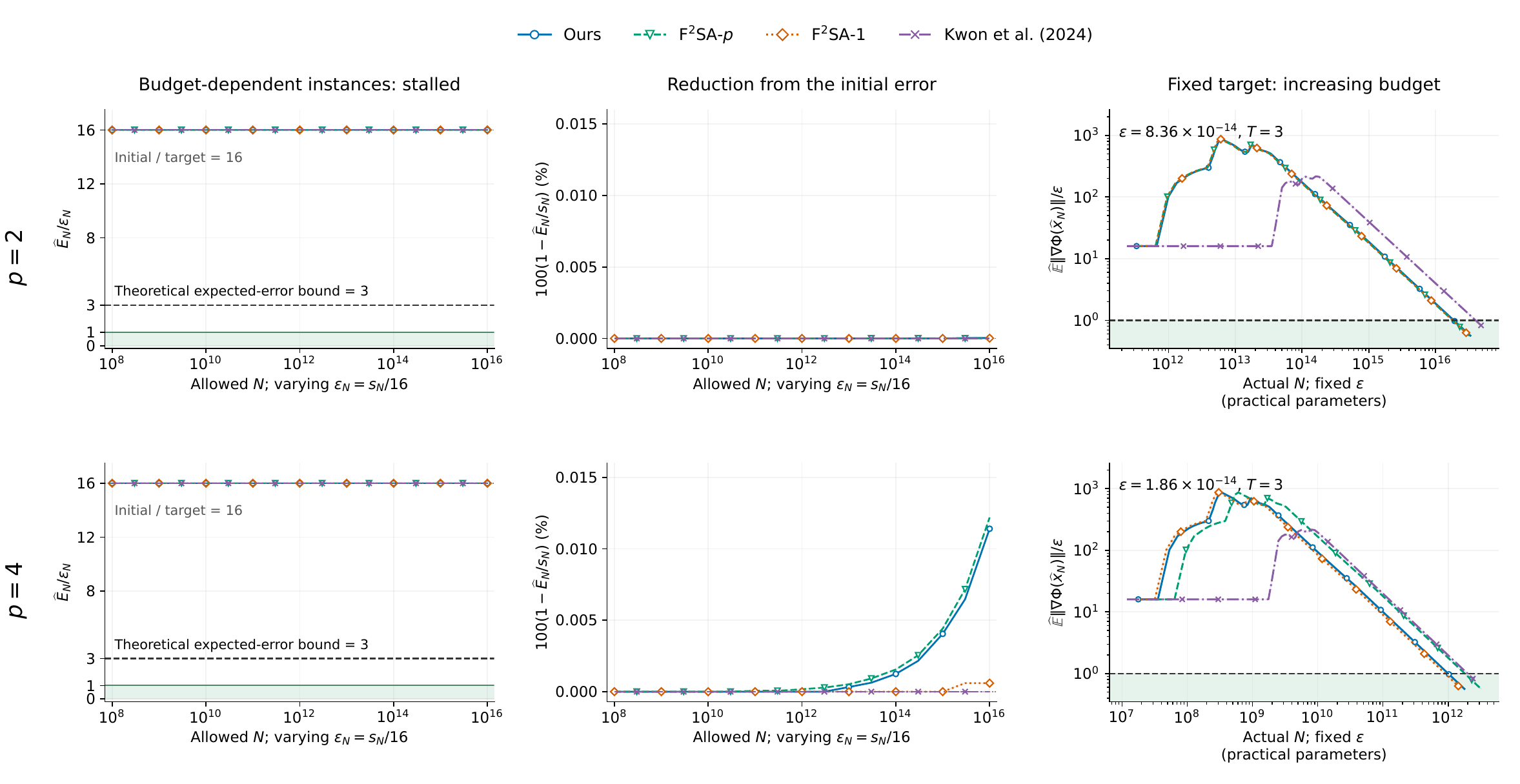}
 \caption{Lower-bound construction, with $p=2$ and $p=4$ in the two rows. Left: estimated uniform-output error $\widehat E_N$ divided by the budget-dependent target $\eps_N$; the dashed line at three is the theoretical expected-error lower bound. Middle: percentage reduction from the initial error $s_N$, exposing the small amount of optimization hidden by a changing instance scale. Right: a separate fixed three-coordinate chain for each $p$, with its target held fixed while actual sample cost increases. These last panels use practical schedules, including weighted node allocation for Ours. Shading shows $\pm1.96$ standard errors across eight noise trajectories. The three columns are different experimental views, not successive stages of one run.}\label{fig:lower-experiment}
\end{figure}

The first two columns of Figure~\ref{fig:lower-experiment} show that the tested methods remain close to their initial error and well above the target. The relative reduction is below $0.016\%$ throughout the displayed window. In particular, a decrease of the unnormalized error as $N$ increases would mainly reflect the scale $s_N$ of the newly constructed instance, rather than progress on a fixed objective. The third column separates this effect: on a fixed short chain, increasing the sample budget eventually brings all four methods below the same target within each panel. The early increase in gradient norm is retained. A lower bound for budgets below the revelation threshold does not imply permanent failure on a fixed finite chain.

\subsection{Cost growth on a fixed collection}
For the rate experiment, let $\mathcal H=\{H_4,H_8,H_{16}\}$ be the fixed collection defined in Appendix~\ref{app:experiment-family}. Each member has $d_x=d_y=4$, a nonlinear strongly convex lower problem, and an additional upper-coordinate term $\arctan(x_1/L)+\pi/2$ with $L\in\{4,8,16\}$. Its derivative is $[L(1+(x_1/L)^2)]^{-1}$, supplying a direction with slow progress while the other coordinates retain bilevel coupling. Coefficients, dimensions, initialization, and noise variance are held fixed across all accuracies and algorithms. It is a separate collection from the chain used for the lower bound.

Let $N_A(H,\eps)$ denote the first recorded sample budget at which the estimated uniform-output error plus $2.365$ standard errors is at most $\eps$. Define the observed collection maximum
\begin{equation}\label{eq:experiment-max}
 M_A(\eps)=\max_{H\in\mathcal H}N_A(H,\eps).
\end{equation}
The maximum is over separately solved problems; it is not an optimization of a pointwise maximum of their objectives. The standard-error margin is a diagnostic stopping rule, not a sequential confidence certificate. We use seven targets $\eps_j=0.12(0.8)^j$, $j=0,\ldots,6$, and repeat the three levels $j=0,3,6$ with independent noise. All 150 configurations, totaling 1200 trajectories, complete successfully. For each method, parameter scalings are frozen before this sweep, with practical constants and the source update recurrences.

\begin{figure}[t]
 \centering
 \includegraphics[width=\textwidth]{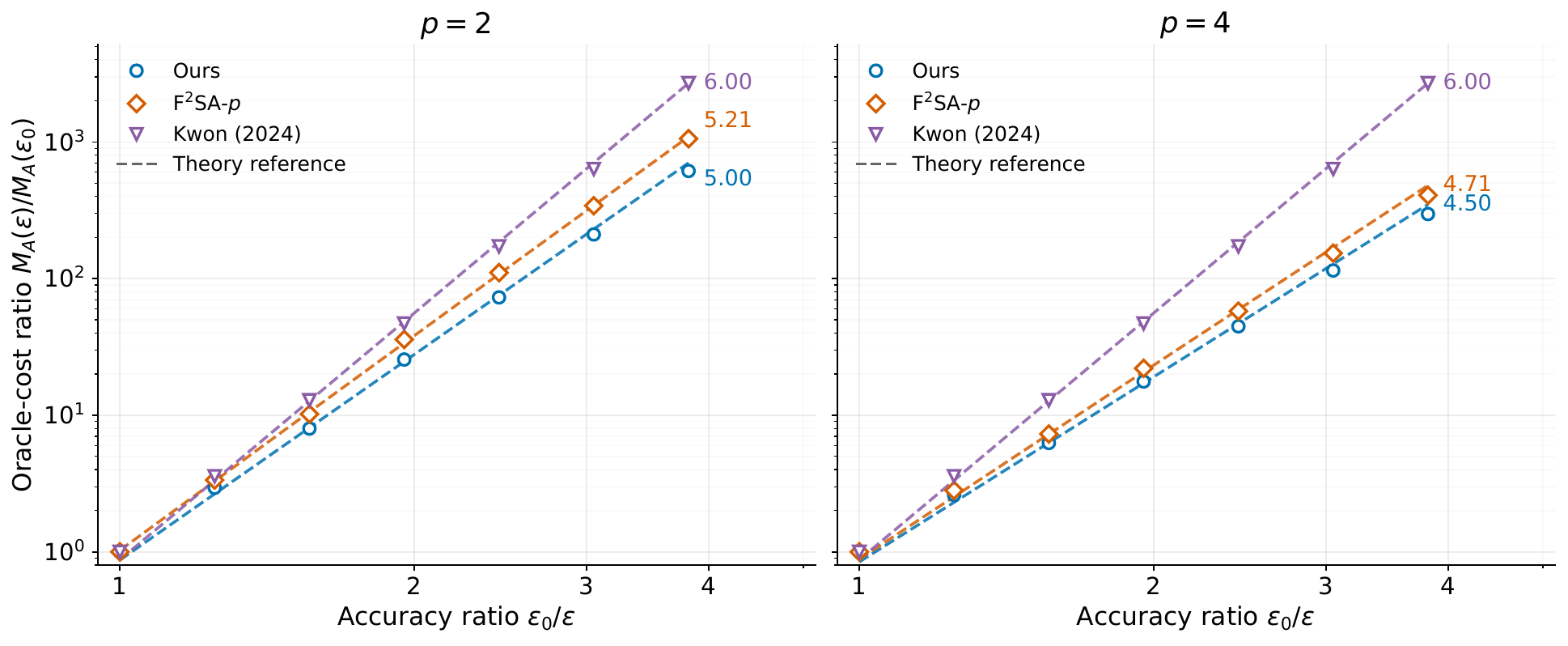}
 \caption{Cost growth on the fixed collection $\mathcal H$, using practical parameters. Markers show $M_A(\eps)/M_A(\eps_0)$ against $x=\eps_0/\eps$, where $\eps_0=0.12$, so the measured series start at $(1,1)$. Dashed curves have fixed theoretical shapes $S(x)=x^{4+2/p}$ (Ours), $S(x)=x^{4+2/p}\log(8x/\eps_0)/\log(8/\eps_0)$ (F$^2$SA-$p$), and $S(x)=x^6$ (Kwon). Only their heights are fitted: $\log C=\operatorname{mean}_j[\log y_j-\log S(x_j)]$. Labels give $[\log S(x_{\max})-\log S(1)]/\log x_{\max}$; empirical fits are not drawn. The factor eight inside the logarithm is a fixed reference convention, not an identified theorem constant. The guides are scaling references, not certified numerical upper bounds.}\label{fig:upper-experiment}
\end{figure}

Figure~\ref{fig:upper-experiment} compares growth after removing each method's initial cost. The main theoretical exponent is $4+2/p$ for Ours and F$^2$SA-$p$, whereas the standard independent-gradient guarantee for Kwon has exponent six. The F$^2$SA-$p$ upper bound also contains a logarithm; we retain it in the reference shape. Across the seven levels, the empirical log--log slopes are $4.80$ and $5.21$ for Ours and F$^2$SA-$p$ at $p=2$, and $4.26$ and $4.50$ at $p=4$; the Kwon slope is $5.87$. Independent-noise repetitions give the same qualitative ordering. The outer-iteration slopes are about $2.08$ for the two normalized methods and $1.94$ for Kwon, while the remaining sample-cost growth comes from the prescribed sampling schedules.

The observed growth reflects both outer progress and accuracy-dependent sampling. Batch rounding and lower-order cost terms also affect this finite window. In absolute cost, F$^2$SA-$p$ uses fewer absolute samples than Ours at every measured target in this experiment; the ratio plot compares growth relative to each method's own starting cost. Appendix~\ref{app:experiments} reports the absolute costs. At fixed $p$, removing a multiplicative $p$ factor cannot change the precision exponent. The figure assesses accuracy scaling at two fixed orders. A separate order sweep with a shared derivative envelope would be needed to assess the uniform-in-$p$ gain experimentally.

\section{Discussion and Conclusion}\label{sec:discussion}
The matching law separates fixed-order exponents from uniform smoothness gains. For $\beta>0$, the effective order saturates around $\log(1/\eps)/\beta$, leaving the unavoidable factor $\log^{2\beta}(1/\eps)$. For $\beta=0$, finitely many derivatives at order $O(\log(1/\eps))$ suffice for $\Theta(\eps^{-4})$. Qualitative analyticity does not imply either quantitative normalization: the baseline hard family is entire in $y$ but has accuracy-dependent higher-derivative growth.

The theorem fixes $M,\Lambda,\beta$ and all baseline budgets while allowing the hardness dimension to grow. It characterizes lower-variable derivative envelopes; joint analyticity, optimal condition-number dependence, and general infinite derivative sequences remain natural extensions. Other observation models and stronger output guarantees also lead to distinct complexity questions.

The numerical study complements this characterization in two ways: the budget-dependent construction exhibits the information barrier, and the fixed collection illustrates cost growth under explicitly specified practical schedules. The latter comparison distinguishes relative growth from absolute sample cost. Testing order-dependent gains on a larger fixed collection is a useful direction for further experiments.

At the degenerate boundary $L_1=\mu$, coupling is forced to vanish and the class with possibly noisy upper gradients has rate $\Theta(\eps^{-4})$; Appendix~\ref{app:boundary} proves this separately. Within the nondegenerate fresh-gradient model, the uniform theorem identifies both the finite-order exponent and the exact infinite-order overhead, with globally valid lower-bound responses and fully counted upper-bound samples.

\label{main-text-end}
\section*{Reproducibility statement}
The theoretical appendices supply the interpolation identity, sample allocations, scalar hard functions, global regularity estimates, and adaptive transcript coupling. Appendix~\ref{app:experiments} specifies the numerical instances, output statistic, practical schedules, and sample accounting. The accompanying experiment directories retain coefficient arrays, raw trajectory summaries, independent-noise repetitions, and plotting sources. Algebraic and numerical checks complement, but do not replace, the proofs. The manuscript compiles using standard pdf\LaTeX{} and Bib\TeX{} with the included vector figures, without external assets or executable dependencies.
\section*{Broader impact}
This work develops information-theoretic foundations for stochastic optimization. The experiments use synthetic objectives and contain no human-subject or proprietary data.
\bibliographystyle{plainnat}
\bibliography{references}

@inproceedings{kwon2023,
  author = {Kwon, Jeongyeol and Kwon, Dohyun and Wright, Stephen and Nowak, Robert D.},
  title = {A Fully First-Order Method for Stochastic Bilevel Optimization},
  booktitle = {Proceedings of the 40th International Conference on Machine Learning},
  series = {Proceedings of Machine Learning Research}, volume = {202},
  pages = {18083--18113}, year = {2023}, publisher = {PMLR},
  url = {https://proceedings.mlr.press/v202/kwon23c.html}
}

@inproceedings{kwon2024,
  author = {Kwon, Jeongyeol and Kwon, Dohyun and Lyu, Hanbaek},
  title = {On the Complexity of First-Order Methods in Stochastic Bilevel Optimization},
  booktitle = {Proceedings of the 41st International Conference on Machine Learning},
  series = {Proceedings of Machine Learning Research}, volume = {235},
  pages = {25784--25811}, year = {2024}, publisher = {PMLR},
  url = {https://proceedings.mlr.press/v235/kwon24b.html}
}

@article{chen2025,
  author = {Chen, Lesi and Ma, Yaohua and Zhang, Jingzhao},
  title = {Near-Optimal Nonconvex-Strongly-Convex Bilevel Optimization with Fully First-Order Oracles},
  journal = {Journal of Machine Learning Research}, volume = {26}, number = {109},
  pages = {1--56}, year = {2025},
  url = {https://www.jmlr.org/papers/v26/23-1104.html}
}

@inproceedings{chen2026,
  author = {Chen, Lesi and Li, Junru and Chayti, El Mahdi and Zhang, Jingzhao},
  title = {Faster Gradient Methods for Highly-Smooth Stochastic Bilevel Optimization},
  booktitle = {International Conference on Learning Representations}, year = {2026},
  note = {Full-text version arXiv:2509.02937v2, March 7, 2026},
  url = {https://arxiv.org/abs/2509.02937v2}
}

@misc{ji2026,
  author = {Ji, Kaiyi},
  title = {Lower Complexity Bounds for Nonconvex-Strongly-Convex Bilevel Optimization with First-Order Oracles},
  year = {2026}, howpublished = {arXiv:2511.19656v3},
  note = {ICML 2026; subsequently merged and strengthened in Chen, Ji, and Zhang (2026)},
  url = {https://arxiv.org/abs/2511.19656v3}
}

@misc{chenji2026,
  author = {Chen, Lesi and Ji, Kaiyi and Zhang, Jingzhao},
  title = {On the Condition Number Dependency in Bilevel Optimization},
  year = {2026}, howpublished = {arXiv:2511.22331v4},
  note = {Revised merged version, August 11, 2026},
  url = {https://arxiv.org/abs/2511.22331v4}
}

@misc{gu2026,
  author = {Gu, Zhihao and Wu, Qilong and Yang, Junchi},
  title = {{SGHA}: A Single-Loop Fully First-Order Algorithm for Nonconvex-Strongly-Convex Bilevel Optimization},
  year = {2026}, howpublished = {arXiv:2608.23211v1},
  note = {August 24, 2026},
  url = {https://arxiv.org/abs/2608.23211v1}
}

@inproceedings{chu2024,
  author = {Chu, Tianshu and Xu, Dachuan and Yao, Wei and Zhang, Jin},
  title = {{SPABA}: A Single-Loop and Probabilistic Stochastic Bilevel Algorithm Achieving Optimal Sample Complexity},
  booktitle = {Proceedings of the 41st International Conference on Machine Learning},
  series = {Proceedings of Machine Learning Research}, volume = {235},
  pages = {8848--8903}, year = {2024}, publisher = {PMLR},
  url = {https://proceedings.mlr.press/v235/chu24a.html}
}

@article{arjevani2023,
  author = {Arjevani, Yossi and Carmon, Yair and Duchi, John C. and Foster, Dylan J. and Srebro, Nathan and Woodworth, Blake},
  title = {Lower Bounds for Non-Convex Stochastic Optimization},
  journal = {Mathematical Programming}, volume = {199}, number = {1--2},
  pages = {165--214}, year = {2023}, doi = {10.1007/s10107-022-01822-7},
  url = {https://arxiv.org/abs/1912.02365}
}

@article{carmon2020,
  author = {Carmon, Yair and Duchi, John C. and Hinder, Oliver and Sidford, Aaron},
  title = {Lower Bounds for Finding Stationary Points {I}},
  journal = {Mathematical Programming}, volume = {184}, number = {1--2},
  pages = {71--120}, year = {2020}, doi = {10.1007/s10107-019-01406-y},
  url = {https://arxiv.org/abs/1710.11606}
}

@inproceedings{yang2023fde,
  author = {Yang, Yifan and Xiao, Peiyao and Ji, Kaiyi},
  title = {Achieving $\mathcal{O}(\epsilon^{-1.5})$ Complexity in {Hessian/Jacobian}-free Stochastic Bilevel Optimization},
  booktitle = {Advances in Neural Information Processing Systems},
  volume = {36}, year = {2023},
  url = {https://proceedings.neurips.cc/paper_files/paper/2023/hash/7c3a8d20ceadb7c519e9ac1bb77a15ff-Abstract-Conference.html}
}

@article{fornberg1988,
  author = {Fornberg, Bengt},
  title = {Generation of Finite Difference Formulas on Arbitrarily Spaced Grids},
  journal = {Mathematics of Computation}, volume = {51}, number = {184},
  pages = {699--706}, year = {1988},
  doi = {10.1090/S0025-5718-1988-0935077-0}
}

@misc{gu2026firstorderlower,
  title         = {An {$\Omega(\kappa_y^8\epsilon^{-6})$} Lower Bound for Stochastic {NC-SC} Bilevel Optimization with First-order Oracles},
  author        = {Gu, Zhihao and Wu, Qilong and Yang, Junchi},
  year          = {2026},
  eprint        = {2609.21905},
  archivePrefix = {arXiv},
  primaryClass  = {math.OC},
  url           = {https://arxiv.org/abs/2609.21905}
}

@misc{pan2026optimalbilevel,
  title         = {Optimal Stochastic Bilevel Optimization with First-Order Oracles},
  author        = {Pan, Linxuan and Yang, Junchi},
  year          = {2026},
  eprint        = {2610.01843},
  archivePrefix = {arXiv},
  primaryClass  = {math.OC},
  url           = {https://arxiv.org/abs/2610.01843}
}
\clearpage
\appendix
\section{Chebyshev gradient differences and independent sampling}\label{app:differencing}

\subsection{An exact interpolation identity}

\begin{lemma}[Odd-degree Chebyshev differentiation]\label{lem:cheb}
Let $n\ge1$ be odd, $t_j=\cos(j\pi/n)$ for $j=0,\ldots,n$, and let
$c_0=c_n=2$, $c_j=1$ otherwise. Put $s_n=(-1)^{(n-1)/2}$. For any radius $h>0$ define
\begin{equation}\label{eq:cheb-weights}
 d_j=\frac{s_n(-1)^j}{n h c_j t_j^2}.
\end{equation}
No $t_j$ is zero. For every vector-valued polynomial $P$ of degree at most $n$,
\begin{equation}\label{eq:cheb-identities}
 \sum_{j=0}^n d_jP(ht_j)=P'(0),\qquad
 \sum_{j=0}^n d_j=0,\qquad
 W:=\sum_{j=0}^n|d_j|=\frac nh.
\end{equation}
\end{lemma}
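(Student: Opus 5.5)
The plan is to read the weights off the Lagrange cardinal polynomials of the Chebyshev--Lobatto nodes, using the closed form of the node polynomial. I will first treat the case $h=1$. The general case then follows by applying the $h=1$ identity to $\tilde P(u)=P(hu)$, since $\tilde P'(0)=hP'(0)$. This accounts for the factor $1/h$ in~\eqref{eq:cheb-weights}.

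\emph{Nonvanishing nodes.} If $t_j=\cos(j\pi/n)=0$, then $j/n=1/2$, which is impossible for odd $n$. So every $t_j$ is nonzero.

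\emph{Node polynomial.} The nodes $t_j$ are exactly the zeros of
\[
 \omega(t)=2^{1-n}(t^2-1)U_{n-1}(t)=2^{1-n}(t^2-1)T_n'(t)/n,
\]
which is monic of degree $n+1$. I will compute two pieces of data about $\omega$.

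First, the values at $0$. Since $n-1$ is even, $U_{n-1}$ is even, and $U_{n-1}(0)=(-1)^{(n-1)/2}=s_n$. Hence $\omega(0)=-2^{1-n}s_n$. Also $\omega'(0)=2tU_{n-1}+(t^2-1)U_{n-1}'$ evaluated at $0$, which equals $-U_{n-1}'(0)=0$.

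Second, the values $\omega'(t_j)$. At an interior node, $T_n'(t_j)=0$, so $\omega'(t_j)=2^{1-n}(t_j^2-1)T_n''(t_j)/n$. The Chebyshev equation $(1-t^2)T_n''=tT_n'-n^2T_n$, together with $T_n(t_j)=(-1)^j$, gives $\omega'(t_j)=2^{1-n}n(-1)^j$. At the endpoints, $\omega'(\pm1)=2^{1-n}\cdot 2(\pm1)T_n'(\pm1)/n$, and $T_n'(\pm1)=(\pm1)^{n-1}n^2$. This yields $2^{2-n}n(-1)^j$. In both cases,
\[
 \omega'(t_j)=2^{1-n}nc_j(-1)^j.
\]

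\emph{The identity.} The cardinal polynomials are $\ell_j(t)=\omega(t)/((t-t_j)\omega'(t_j))$. Every $P$ of degree at most $n$ equals $\sum_j P(t_j)\ell_j$ componentwise, so $P'(0)=\sum_j\ell_j'(0)P(t_j)$. Differentiating the quotient at $0$ and using $\omega'(0)=0$ gives
\[
 \ell_j'(0)=-\frac{\omega(0)}{t_j^2\,\omega'(t_j)}=\frac{s_n(-1)^j}{nc_jt_j^2},
\]
which is $d_j$ at $h=1$. This proves the first identity in~\eqref{eq:cheb-identities}. Taking $P$ constant gives $\sum_j d_j=0$.

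\emph{Weight norm.} The sign of $d_j$ is $s_n(-1)^j$, so $W=s_n\sum_j(-1)^jd_j$. Apply the identity to $P(u)=T_n(u/h)$, which has degree $n$. Then $P(ht_j)=T_n(t_j)=(-1)^j$, and
\[
 P'(0)=T_n'(0)/h=nU_{n-1}(0)/h=ns_n/h.
\]
Therefore $W=s_n\cdot ns_n/h=n/h$.

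\emph{Main obstacle.} The only delicate step is the bookkeeping of signs and the endpoint factor $c_j$ in $\omega'(t_j)$. I would double-check it numerically for $n=1$, where $t_0=1$, $t_1=-1$, and $d_0=-d_1=1/(2h)$ is the central difference, and for $n=3$.
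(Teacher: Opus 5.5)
Your proof is correct and takes the same route as the paper's. You use the Lagrange cardinal polynomials of the node polynomial $(t^2-1)U_{n-1}$, evenness to get $\omega'(0)=0$, the values $\omega'(t_j)=nc_j(-1)^j$ (your monic factor $2^{1-n}$ cancels), and the test polynomial $T_n(t/h)$ for the weight norm. You also supply the verifications that the paper only asserts, via the Chebyshev differential equation and the endpoint values $T_n'(\pm1)$.
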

\begin{proof}
Let $U_{n-1}$ be the polynomial defined by
$U_{n-1}(\cos\theta)=\sin(n\theta)/\sin\theta$, and put
$w(t)=(t^2-1)U_{n-1}(t)$. Its roots are exactly the $t_j$.
For odd $n$, $w$ is even, $w(0)=-s_n$, and
$w'(t_j)=n c_j(-1)^j$. Differentiating the Lagrange cardinal polynomial
$w(t)/((t-t_j)w'(t_j))$ at zero gives \eqref{eq:cheb-weights} after rescaling by $h$. This proves polynomial exactness and the zero-sum identity. Finally, the polynomial $T_n$ defined by $T_n(\cos\theta)=\cos(n\theta)$ has
$T_n(t_j)=(-1)^j$ and $T_n'(0)=n s_n$.
All terms $d_j(-1)^j$ have sign $s_n$, so exactness applied to $T_n(t/h)$ yields $W=n/h$.
\end{proof}

\subsection{Bias under the stated finite list of derivatives}

Fix a radius $V\ge\max\{1,L_0/\mu\}$. Let $r\le p$ be a positive integer and choose $n$ to be the smallest odd integer at least $r$; thus $r\le n\le r+1$. At any $(x,y)$ and any $v$ with $\norm v\le V$, define the mathematical gradient difference
\begin{equation}\label{eq:Doperator}
 \mathcal Z_{r,h}(x,y;v)=\sum_{j=0}^n d_j\nabla g(x,y+ht_jv).
\end{equation}
Write $J_y\nabla g(x,y)[v]=(g_{xy}v,g_{yy}v)$. This notation denotes a population derivative, not an additional oracle.

Choose fixed $\overline M\ge\max\{1,M,\sqrt2L_2\}$ and
$\overline\Lambda\ge\max\{1,\Lambda\}$. The top Lipschitz bound in \eqref{eq:high}, or the two $L_2$ block bounds if $r=1$, gives the degree-$r$ Taylor remainder
\[
 \left\|\nabla g(x,y+tv)-\sum_{k=0}^r\frac{t^k}{k!}
 D_y^k\nabla g(x,y)[v^{\otimes k}]\right\|
 \le A_r|t|^{r+1},
\]
where a uniform valid choice is
\begin{equation}\label{eq:Ar}
 A_r=\overline M(\overline\Lambda V)^{r+1}((r+1)!)^{\beta-1}.
\end{equation}
The integral Taylor remainder proves this inequality also in the Lipschitz top-derivative formulation. When $r=1$, the norm of the full second lower derivative is bounded by $\sqrt2L_2$ almost everywhere; the same remainder follows by integration along the line.
Lemma~\ref{lem:cheb} therefore yields
\begin{equation}\label{eq:Zbias}
 \norm{\mathcal Z_{r,h}(x,y;v)-J_y\nabla g(x,y)[v]}
 \le n A_rh^r.
\end{equation}
For a desired bias $a>0$, take
\begin{equation}\label{eq:radius}
 h=\left(\frac{a}{nA_r}\right)^{1/r}.
\end{equation}
There is no need to restrict $h$ by inner strong convexity: these are queries at arbitrary real lower-variable points, not minimizers of negatively perturbed objectives.

\begin{lemma}[Uniform size of the differentiation weights]\label{lem:W}
For $0<a<1$, the choice \eqref{eq:radius} satisfies
\begin{equation}\label{eq:Wbound}
 W=\frac nh\le C_\beta r^\beta a^{-1/r},
\end{equation}
where the constant depends on the fixed parameters and $V$, not on $r$.
\end{lemma}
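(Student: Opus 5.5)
The plan is to substitute the radius \eqref{eq:radius} into $W=n/h$ from Lemma~\ref{lem:cheb}. This gives the exact expression
\[
 W=n\left(\frac{nA_r}{a}\right)^{1/r}=n^{1+1/r}A_r^{1/r}a^{-1/r}.
\]
It therefore suffices to show $n^{1+1/r}A_r^{1/r}\le C_\beta r^\beta$ with $C_\beta$ independent of $r$. I will bound the factors one at a time, using $r\le n\le r+1\le 2r$ throughout.

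\textbf{The $n$ factor.} First, $n^{1/r}\le(2r)^{1/r}\le 4$ uniformly in $r\ge1$. Hence $n^{1+1/r}\le 8r$. This linear factor in $r$ must be absorbed by the factorial in $A_r$.

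\textbf{The $A_r$ factor.} From \eqref{eq:Ar},
\[
 A_r^{1/r}=\overline M^{1/r}(\overline\Lambda V)^{(r+1)/r}((r+1)!)^{(\beta-1)/r}.
\]
Since $\overline M,\overline\Lambda V\ge1$, the first two factors are at most $\overline M(\overline\Lambda V)^2$. For the factorial I use Stirling-type bounds $(k/e)^k\le k!\le k^k$ with $k=r+1\le 2r$. This gives $((r+1)!)^{1/r}$ between $c_1 r$ and $c_2 r$ for absolute constants. For instance, $((r+1)/e)^{(r+1)/r}\ge r/e$ and $(r+1)^{(r+1)/r}\le 2r\cdot(2r)^{1/r}\le 8r$.

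\textbf{Combining.} The $\beta$-dependence is the only delicate point, because $\beta-1$ may be negative or positive. For $\beta\ge1$, I use the upper bound, giving $((r+1)!)^{(\beta-1)/r}\le(8r)^{\beta-1}$. For $0\le\beta<1$, I use the lower bound, giving $((r+1)!)^{(\beta-1)/r}\le(r/e)^{\beta-1}$. In either case the factor is at most $C'_\beta r^{\beta-1}$. Multiplying by the $8r$ from the $n$ factor yields
\[
 W\le 8\,\overline M(\overline\Lambda V)^2C'_\beta\, r^\beta a^{-1/r}.
\]
This is \eqref{eq:Wbound}. The constant depends only on $\overline M,\overline\Lambda,V,\beta$.

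The hypothesis $0<a<1$ is not needed for this algebra. It only ensures that $a^{-1/r}\ge1$, which is consistent with later uses. I expect the main obstacle to be exactly this cancellation: the stencil factor $n\approx r$ must be absorbed by the $1/(r+1)!$ hidden in $A_r$, which is the "Taylor factorial is essential" remark. That requires keeping the two directions of the Stirling estimate straight according to the sign of $\beta-1$.
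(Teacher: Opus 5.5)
Your proposal is correct and follows essentially the same route as the paper's proof. Both substitute \eqref{eq:radius} to get $W=n^{1+1/r}A_r^{1/r}a^{-1/r}$, bound $n^{1+1/r}=O(r)$, and use the two-sided estimate $((r+1)!)^{1/r}\asymp r$ in the direction dictated by the sign of $\beta-1$. Your slightly looser constants, such as $n^{1/r}\le 4$ instead of the paper's $n^{1/r}\le 2$, are immaterial.
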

\begin{proof}
Direct substitution gives
\[
 W=n^{1+1/r}\overline M^{1/r}(\overline\Lambda V)^{1+1/r}
 ((r+1)!)^{(\beta-1)/r}a^{-1/r}.
\]
The elementary factorial bounds imply
$c r\le((r+1)!)^{1/r}\le C r$ for all $r\ge1$.
Also $r\le n\le r+1$ and $n^{1/r}\le2$.
Use the upper factorial bound if $\beta\ge1$ and the lower one if $0\le\beta<1$. This proves \eqref{eq:Wbound} in both cases.
\end{proof}

\subsection{Independent node batches remove a multiplicative order cost}

Given any $B\ge1$, allocate
\begin{equation}\label{eq:node-batch}
 b_j=\left\lceil B\frac{|d_j|}{W}\right\rceil
\end{equation}
fresh $g$-gradient samples to node $(x,y+ht_jv)$. Let $\overline G_j$ be that sample mean and define
\begin{equation}\label{eq:sampled-Z}
 \widehat Z_B(x,y;v)=\sum_{j=0}^n d_j\overline G_j.
\end{equation}
Its conditional mean is \eqref{eq:Doperator}. Independence of every sample gives
\begin{align}
 \E\norm{\widehat Z_B-\mathcal Z_{r,h}}^2
 &\le\sigma^2\sum_j\frac{d_j^2}{b_j}
 \le\frac{\sigma^2W^2}{B},\label{eq:Zvariance}\\
 \sum_j b_j&\le B+n+1.\label{eq:Zcost}
\end{align}
No deterministic gradient-size bound is used in \eqref{eq:Zvariance}; the deterministic node means are summed, not randomized. This matters when $0\le\beta<1$ and the radius in \eqref{eq:radius} can grow with the order. Each sample is evaluated only once. Neither common-random-number cancellation nor sample smoothness is assumed.

\section{Complete fixed-budget upper-bound proof}\label{app:upper}

\subsection{Hypergradient stability}

Let $\kappa=L_1/\mu$. Strong monotonicity gives
$\norm{y^*(x)-y^*(x')}\le\kappa\norm{x-x'}$.
Implicit differentiation gives
\[
 \nabla\Phi(x)=f_x-g_{xy}g_{yy}^{-1}f_y
 \quad\text{at }(x,y^*(x)).
\]
At an arbitrary $(x,y)$ put
\[
 v^*(x,y)=g_{yy}(x,y)^{-1}f_y(x,y),\qquad
 T(x,y)=f_x(x,y)-g_{xy}(x,y)v^*(x,y).
\]
The bounds $\norm{f_y}\le L_0$, $\norm{g_{xy}}\le L_1$,
$\norm{g_{yy}^{-1}}\le\mu^{-1}$ show that $\norm{v^*}\le L_0/\mu$ and
\begin{equation}\label{eq:Cy}
 \norm{T(x,y)-T(x,y^*(x))}\le C_y\norm{y-y^*(x)},
 \quad C_y=L_1+\frac{L_2L_0}{\mu}
       +\frac{L_1L_2L_0}{\mu^2}+\frac{L_1^2}{\mu}.
\end{equation}
To verify this, subtract the two three-factor products and use
$\norm{H^{-1}-\widetilde H^{-1}}\le\mu^{-2}\norm{H-\widetilde H}$.
The identical calculation in $(x,y)$ gives a Lipschitz constant
\begin{equation}\label{eq:Lphi-new}
 L_\Phi=\sqrt{1+\kappa^2}\,C_y
\end{equation}
for $\nabla\Phi$.

\subsection{A batch-allocation lemma}

\begin{lemma}[Geometric allocation in a contraction]\label{lem:batch}
Suppose a squared-error recursion satisfies
\[
 E_{k+1}\le qE_k+d+\frac{A}{B_k},\qquad 0<q<1,
\]
with $E_0\le D^2$. For a target $u>0$, assume $d/(1-q)\le u^2/4$.
Choose the deterministic length
\[
 J=\max\left\{1,\left\lceil
 \frac{\log(4(D+1)^2/u^2)}{-\log q}\right\rceil\right\},
\]
which has $q^JD^2\le u^2/4$, and let
\begin{equation}\label{eq:geobatches}
 B_k=\max\left\{1,\left\lceil
 \frac{4A}{u^2(1-\sqrt q)}q^{(J-1-k)/2}
 \right\rceil\right\},\qquad 0\le k<J.
\end{equation}
Then $E_J\le3u^2/4$ and
\[
 \sum_{k=0}^{J-1}B_k\le J+\frac{4A}{u^2(1-\sqrt q)^2}.
\]
\end{lemma}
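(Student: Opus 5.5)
We prove Lemma~\ref{lem:batch} by unrolling the recursion and then bounding the accumulated noise term with the chosen geometric batches. Iterating $E_{k+1}\le qE_k+d+A/B_k$ from $k=0$ to $J-1$ gives
\[
 E_J\le q^JE_0+d\sum_{k=0}^{J-1}q^{k}+A\sum_{k=0}^{J-1}\frac{q^{J-1-k}}{B_k}.
\]
We bound the three terms separately.

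The first term is at most $q^JD^2$. The choice of $J$ gives $q^J\le u^2/(4(D+1)^2)$. When the ceiling is active, this follows from $J(-\log q)\ge\log(4(D+1)^2/u^2)$. When $J=1$ because the logarithm is nonpositive, the inequality still holds, since then $4(D+1)^2\le u^2$ and $q<1$. Hence $q^JD^2\le u^2/4$, as the statement records.

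The bias term satisfies $d\sum_k q^k\le d/(1-q)\le u^2/4$ by hypothesis.

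For the noise term, use $B_k\ge \frac{4A}{u^2(1-\sqrt q)}q^{(J-1-k)/2}$, which holds because the ceiling and the maximum only increase $B_k$. Each summand then satisfies
\[
 \frac{q^{J-1-k}}{B_k}\le \frac{u^2(1-\sqrt q)}{4A}\,q^{(J-1-k)/2}.
\]
Summing the geometric series in $\sqrt q$ gives at most $\frac{u^2(1-\sqrt q)}{4A}\cdot\frac1{1-\sqrt q}=u^2/(4A)$. The noise term is therefore at most $u^2/4$, and the three bounds add to $E_J\le 3u^2/4$. If $A=0$ the noise term vanishes, so this case is immediate.

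For the cost, use $\max\{1,\lceil z\rceil\}\le 1+z$ for $z\ge0$. This gives
\[
 \sum_k B_k\le J+\frac{4A}{u^2(1-\sqrt q)}\sum_{k=0}^{J-1}q^{(J-1-k)/2}\le J+\frac{4A}{u^2(1-\sqrt q)^2},
\]
again by the geometric sum in $\sqrt q$.

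The only delicate point is the exponent bookkeeping. The weight $q^{J-1-k}$ in the unrolled sum must be paired with the batch factor $q^{(J-1-k)/2}$ so that both the error sum and the cost sum are geometric in $\sqrt q$. That is where the factor $(1-\sqrt q)^{-2}$ arises, as opposed to a factor of $J$. The small case check on $J$ for the initial term is routine.
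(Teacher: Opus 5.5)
Your proof is correct and follows the paper's argument: unroll the recursion, bound the initial and bias terms by $u^2/4$ each, and pair $q^{J-1-k}$ with the batch factor $q^{(J-1-k)/2}$ so that both the noise sum and the cost sum are geometric in $\sqrt q$. Your explicit verification of $q^JD^2\le u^2/4$, including the case $J=1$, fills in a step the paper only asserts in the statement.
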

\begin{proof}
Unroll the recursion. The initial term is at most $u^2/4$ and the bias term is at most $d/(1-q)\le u^2/4$.
For the noise term, \eqref{eq:geobatches} implies
\[
 \sum_{k=0}^{J-1}\frac{Aq^{J-1-k}}{B_k}
 \le\frac{u^2(1-\sqrt q)}4\sum_{k=0}^{J-1}q^{(J-1-k)/2}
 \le u^2/4.
\]
Summing the ceiling bounds proves the cost assertion. If $A=0$, take $B_k=1$ and omit the noise term.
\end{proof}

\subsection{The ordinary inner solution}

Run a predetermined $K$ normalized outer steps of length $\eta$. Their iterates obey
$\norm{x_t-x_0}\le K\eta$ for every outcome. Thus, when restarting an inner solve from the supplied $y_0$,
\[
 \norm{y_0-y^*(x_t)}\le R_0+\kappa K\eta=:D_y.
\]
For $g(x_t,\cdot)$, batched SGD with stepsize $\alpha_y=\mu/L_1^2$ satisfies
\[
 \E\norm{Y_{k+1}-y^*(x_t)}^2
 \le q_y\E\norm{Y_k-y^*(x_t)}^2
       +\alpha_y^2\sigma^2/B_k,
 \qquad q_y=1-\mu^2/L_1^2\in(0,1).
\]
This follows by expanding the squared distance, using strong monotonicity and the $L_1$-Lipschitz gradient bound. Lemma~\ref{lem:batch} produces $Y$ with
\begin{equation}\label{eq:ytarget}
 \E\norm{Y-y^*(x_t)}\le u_y:=\frac{\eps}{32C_y}
\end{equation}
using at most
$C[\eps^{-2}+\log(2+D_y/\eps)]$ samples. The loop length and every batch size are fixed before observing the samples.

\subsection{A projected stochastic adjoint solve}

Fix the resulting $(x,Y)$, conditionally on everything so far. Write
$H=g_{yy}(x,Y)$, $b=f_y(x,Y)$, and $v^*=H^{-1}b$.
These objects are used in analysis only. The algorithm never receives $H$.
Project the adjoint iterate onto the Euclidean ball of radius
$V=\max\{1,L_0/\mu\}$, start at $v_0=0$, and use
\begin{equation}\label{eq:adjoint-step}
 v_{k+1}=\Pi_V\left(v_k-\alpha_v
 \left[(\widehat Z_{B_k}(x,Y;v_k))_y-\overline F_{y,k}\right]\right),
 \qquad \alpha_v=\frac1{2L_1},
\end{equation}
where $\overline F_{y,k}$ is an independent mean of $B_k$ fresh $f$-gradients, restricted to its $y$ block. At most $2B_k+n+1$ samples are used in a step. By \eqref{eq:Zbias} and \eqref{eq:Zvariance}, the conditional mean of the bracket equals
$Hv_k-b+e(v_k)$, where $\norm{e(v_k)}\le a$, and its conditional variance is at most
$\sigma^2(1+W^2)/B_k$.

For completeness, nonexpansiveness of projection and $v^*\in\{\norm v\le V\}$ give
\begin{align*}
 \E[\norm{v_{k+1}-v^*}^2\mid v_k]
 &\le\norm{(I-\alpha_vH)(v_k-v^*)-\alpha_ve(v_k)}^2
      +\frac{\alpha_v^2\sigma^2(1+W^2)}{B_k}\\
 &\le q_v\norm{v_k-v^*}^2+\frac{3\alpha_v a^2}{\mu}
      +\frac{\alpha_v^2\sigma^2(1+W^2)}{B_k},
 \qquad q_v=1-\alpha_v\mu/2.
\end{align*}
Indeed $\norm{I-\alpha_vH}\le1-\alpha_v\mu$ and Young's inequality bounds the cross term by
$(\alpha_v\mu/2)\norm{v_k-v^*}^2+(2\alpha_v/\mu)a^2$.
The bias floor is at most $6a^2/\mu^2$.

Take
\begin{equation}\label{eq:accuracy-choices}
 u_v=\frac{\eps}{32L_1},\qquad
 a=c_a\eps,\quad
 0<c_a\le\min\left\{\frac1{32},\frac{\mu}{256L_1}\right\}.
\end{equation}
Then $6a^2/\mu^2\le u_v^2/4$. This assigns a fixed fraction of the adjoint error budget to differencing bias; geometric batching controls the remaining stochastic error.
Lemma~\ref{lem:batch}, with $D=L_0/\mu$ and
$A=\alpha_v^2\sigma^2(1+W^2)$, produces $\widetilde v$ satisfying
\[
 \E[\norm{\widetilde v-v^*(x,Y)}\mid x,Y]\le u_v
\]
in a fixed budget of at most
\begin{equation}\label{eq:adjoint-cost}
 C\left[(1+W^2)\eps^{-2}+(n+1)\log(2/\eps)\right]
\end{equation}
samples. The estimate does not require the biased mean map in \eqref{eq:adjoint-step} to be linear, conservative, or Lipschitz: its uniform additive bias bound suffices.

\subsection{The outer direction and full sample accounting}

Using fresh independent samples, form
\[
 \widehat q=\overline F_x(x,Y)-(\widehat Z_{B_o}(x,Y;\widetilde v))_x,
\]
with $B_o$ samples in the upper mean and node allocations \eqref{eq:node-batch} in the lower difference. Set
$B_o=\max\{1,\lceil1024\sigma^2(1+W^2)\eps^{-2}\rceil\}$.
The conditional noise contribution in expected norm is at most $\eps/32$.
Combining \eqref{eq:Cy}, \eqref{eq:ytarget}, \eqref{eq:accuracy-choices}, and the adjoint error gives
\begin{equation}\label{eq:outer-error}
 \E[\norm{\widehat q-\nabla\Phi(x)}\mid\text{outer history}]
 \le C_yu_y+L_1u_v+a+\eps/32\le\eps/8.
\end{equation}
The entire outer direction has sample cost at most
\begin{equation}\label{eq:per-outer-cost}
 C\left[(1+W^2)\eps^{-2}+(n+1)\log(2/\eps)
       +\log(2+D_y/\eps)\right]
\end{equation}
Set
\[
 \eta=\frac{\eps}{2L_\Phi},\qquad
 K=\left\lceil\frac{4\Delta}{\eta\eps}\right\rceil,
 \qquad
 x_{t+1}=x_t-\eta\frac{\widehat q_t}{\norm{\widehat q_t}},
\]
with no movement when $\widehat q_t=0$. For any $a,z$, with the same convention at zero,
$\ip{a}{z/\norm z}\ge\norm a-2\norm{z-a}$.
The descent lemma and \eqref{eq:outer-error}, followed by telescoping, give for an independently uniform $I\in\{0,\ldots,K-1\}$,
\[
 \E\norm{\nabla\Phi(x_I)}
 \le\frac{\Delta}{K\eta}+\frac\eps4+\frac{L_\Phi\eta}{2}
 \le\frac{3\eps}{4}.
\]
Here $K=O(\eps^{-2})$ and $D_y=O(\eps^{-1})$.

If $\beta>0$, choose an integer $r\le p$ minimizing
$r^\beta\eps^{-1/r}$. Such a minimizer exists and has $r=O_\beta(\log(1/\eps))$ for small $\eps$. If $\beta=0$, choose
$r=\min\{p,\lceil\log(1/\eps)\rceil\}$, interpreting the minimum in the evident way for $p=\infty$. In this latter case
$r^\beta\eps^{-1/r}\le e\D_{p,0}(\eps)$.
Use this $r$ in \eqref{eq:radius}, with the bias in \eqref{eq:accuracy-choices}.
Lemma~\ref{lem:W} implies $W\le C\D_{p,\beta}(\eps)$ and $n=O_\beta(\log(1/\eps))$.
Summing \eqref{eq:per-outer-cost} over all outer iterations yields
\[
 N\le C\eps^{-4}(1+W^2)+C\eps^{-2}\log^2(1/\eps)
 \le C\eps^{-4}\D_{p,\beta}(\eps)^2,
\]
because $\D_{p,\beta}(\eps)\ge1$. All finite moments used in telescoping exist: the outer iterates lie in a deterministic bounded ball, and the inner recursions bound their own second moments. This proves the upper half of Theorem~\ref{thm:uniform} with constants and a threshold uniform in $p$. In particular, no multiplicative $\log(1/\eps)$ or $p$ was hidden in the inner solves: across-node rounding contributes only $n+1$ samples per adjoint step, and the across-time noise costs form a geometric sum.

\section{A split chain with a bounded frontier potential}\label{sec:split}

This section is the structural core of the lower bound. A usual sum of $T$ chain links has range $O(T)$. Embedding that entire sum in a narrow lower-variable perturbation would destroy either strong convexity or the finite-order constants. We instead split the chain into an extensive part $A_T$ and a bounded part $B_T$.

\subsection{Fixed one-dimensional functions}
All scalar functions in this subsection are fixed once and for all, independently of $T$ and accuracy. A concrete smooth step is
\[
 s_0(t)=\begin{cases}e^{-1/t}&t>0,\\0&t\le0,\end{cases}
 \qquad S(t)=\frac{s_0(t)}{s_0(t)+s_0(1-t)}.
\]
It is $C^\infty$, nondecreasing, zero on $(-\infty,0]$, and one on $[1,\infty)$. Define
\begin{align*}
 \Psi(t)&=S(2t-1),& a(t)&=\Psi(t)+\Psi(-t),\\
 \ell(t)&=t\left[1-S\left(8|t|-1\right)\right],& \delta&=1/16.
\end{align*}
Thus $a=0$ on $[-1/2,1/2]$, $a=1$ on $(-\infty,-1]\cup[1,\infty)$, and $0\le a\le1$. Also $\ell(t)=t$ for $|t|\le1/8$ and $\ell=0$ for $|t|\ge1/4$. Although the display uses $|t|$, the functions are smooth at zero because the cutoffs are locally constant there.

Let
\[
 \zeta(t)=S\left(\frac{|t|-\delta}{\delta}\right)\left[1-S(|t|-1)\right].
\]
This is nonnegative, even, smooth, compactly supported, zero on $[-\delta,\delta]$, and equal to one for $1/8\le|t|\le1$. Set
\begin{equation}\label{eq:M-choice}
 B_0=\norm\ell_\infty,\quad
 M_{\rm ch}=2+\norm{\ell'}_\infty+B_0\norm{a'}_\infty,\qquad
 r(t)=M_{\rm ch}\int_{-\infty}^t\zeta(u)\,du.
\end{equation}
Then $r$ is bounded, nonnegative, nondecreasing, smooth, constant on $[-\delta,\delta]$, and $r'(t)=M_{\rm ch}$ on $1/8\le|t|\le1$. Its derivative is even.

\subsection{The two potentials}
For $z\in\R^T$, define
\begin{align}
 A_T(z)&=-r(z_1)+\sum_{i=2}^T\left[\Psi(-z_{i-1})r(-z_i)-\Psi(z_{i-1})r(z_i)\right],\label{eq:A}\\
 P_i(z)&=\prod_{j=1}^i a(z_j),\qquad P_0=1,\label{eq:prefix}\\
 B_T(z)&=-\sum_{i=1}^T P_{i-1}(z)\ell(z_i),\qquad H_T=A_T+B_T.\label{eq:B}
\end{align}

\begin{lemma}[Dimension-free bounds for the bounded part]\label{lem:B-bounds}
There are absolute constants $C_1,C_2$ such that, for every $T$ and $z$,
\begin{equation}\label{eq:B-bounds}
 |B_T(z)|\le B_0,\qquad \norm{\nabla B_T(z)}\le C_1,\qquad
 \norm{\nabla^2 B_T(z)}\le C_2.
\end{equation}
\end{lemma}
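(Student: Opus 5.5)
The plan rests on the structural fact that at any $z$ at most one summand of $B_T$ is nonzero, and that derivatives localize to a short window. Let $k$ be the first index with $|z_k|<1/2$ (if none, set $k=T+1$).

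\textbf{Localization.} For $i>k$ we have $P_{i-1}(z)=0$, because $a(z_k)=0$ on $[-1/2,1/2]$. For $i<k$ we have $|z_i|\ge1/2>1/4$, so $\ell(z_i)=0$. Hence $B_T(z)=-P_{k-1}(z)\ell(z_k)$, which immediately gives $|B_T|\le B_0$ since $0\le a\le1$.

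This localization is pointwise only. To differentiate, I will work on a neighbourhood where the relevant zero patterns persist, or argue summand by summand.

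\textbf{Summand derivatives.} The gradient of summand $i$ has entries $-P_{i-1}\ell'(z_i)$ in coordinate $i$ and $-\bigl(\prod_{j<i,j\ne m}a(z_j)\bigr)a'(z_m)\ell(z_i)$ in coordinate $m<i$. This is nonzero only if $|z_i|<1/4$ and every $|z_j|>1/2$ for $j<i$ (with $a(z_j)>0$, except possibly at $j=m$, which must still have $a'(z_m)\neq 0$, hence $|z_m|>1/2$). So at any point only the summand $i=k$ contributes. Its gradient norm squared is at most
\[
 \norm{\ell'}_\infty^2+B_0^2\sum_{m<k}a'(z_m)^2\prod_{j<k,j\ne m}a(z_j)^2 .
\]

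\textbf{Bernoulli step.} Using $|a'|^2\le C_a a(1-a)$, together with $a^2\le a$, the sum is bounded by
\[
 C_a\sum_m (1-a_m)\prod_{j\ne m}a_j\le C_a,
\]
since the left side is the probability of exactly one failure in independent Bernoulli$(a_j)$ trials.

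The inequality $|a'|^2\le C a(1-a)$ must itself be verified. It is the main obstacle. I expect it to hold because $a$ is built from $S$ with flat $e^{-1/t}$ ends: near $a=0$ we have $S'^2/S\to0$, and symmetrically near $a=1$. So $a'^2/(a(1-a))$ is bounded on the transition region and vanishes to all orders at its ends. I will check this directly from $s_0$. Then $C_1^2=2(\norm{\ell'}_\infty^2+B_0^2C_a)$.

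\textbf{Hessian.} For the Hessian I apply $\norm{\nabla^2}\le$ Frobenius norm to the single active summand. There are four types of entries:
\begin{itemize}
\item the $(k,k)$ entry, involving $\ell''$;
\item the $(k,m)$ entries, involving $\ell' a'_m$, bounded via the same exactly-one-failure sum;
\item the $(m,m)$ entries, involving $a''_m\prod_{j\ne m}a_j\,\ell$;
\item the $(m,m')$ entries, involving $a'_ma'_{m'}\prod_{j\ne m,m'}a_j\,\ell$.
\end{itemize}
For the last two types I need $|a''|\le C\sqrt{a(1-a)}$, or more simply $|a''|^2\le Ca(1-a)$, which follows by the same flatness argument.

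The off-diagonal squared sum is then $\sum_{m\ne m'}a'^2_ma'^2_{m'}\prod a_j^2\le C^2\Pr[\text{exactly two failures}]\le C^2$. The diagonal sum $\sum_m a''^2_m\prod_{j\ne m}a_j^2$ is bounded by the exactly-one-failure probability.

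Summing these four bounded contributions gives the absolute constant $C_2$, independent of $T$ and $z$.
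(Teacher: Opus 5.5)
Your proposal is correct, and it shares the paper's skeleton. Only the summand indexed by the first coordinate of magnitude below $1/2$ can contribute to the value or to any derivative, and the gate products are controlled by the exactly-one-failure identity. Two sub-steps are handled differently.

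\textbf{The gate inequality.} You single out $|a'|^2\le C_a a(1-a)$ as the main obstacle and plan to verify it by analysing $s_0$ near the ends of the transition region. The paper gets it in two lines for any $C^2$ gate with values in $[0,1]$. If $\phi\ge0$ and $|\phi''|\le M_a$, then $0\le\phi(t+u)\le\phi(t)+u\phi'(t)+M_au^2/2$. Taking $u=-\phi'(t)/M_a$ gives $\phi'(t)^2\le2M_a\phi(t)$. Applying this to $\phi=a$ and $\phi=1-a$, and using $\min\{a,1-a\}\le2a(1-a)$, yields $C_a=4M_a$.

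\textbf{The Hessian.} The paper uses operator norms of the structured prefix matrix. With $b_j=a_j'/a_j$, the off-diagonal part of $\nabla^2P$ is $P[bb^\top-\diag(b_j^2)]$. Its norm is at most $2P\norm b^2\le2C_a$, by the same exactly-one-failure sum. The diagonal part is a diagonal matrix, so its norm is at most $\norm{a''}_\infty$.

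Your Frobenius bound needs more. For the diagonal you require the extra inequality $|a''|^2\le Ca(1-a)$. For the off-diagonal you use an exactly-two-failures count, where ordered pairs contribute a harmless factor $2$. The inequality $|a''|^2\le Ca(1-a)$ does hold here, but only because of the $e^{-1/t}$ flatness. It fails for a gate behaving like $(t-1/2)^2$ at the edge of its zero set, even though the paper's argument still applies to such a gate.

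So your route is more entrywise and elementary, but it relies on a construction-specific property the paper never needs. You can remove that dependence by bounding the diagonal block through its largest entry, $B_0\norm{a''}_\infty$, and reserving the Frobenius estimate for the remaining entries.
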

\begin{proof}
The support of $\ell$ and of each of its derivatives is contained in a region where $a$ is identically zero. Once a nonzero $\ell(z_i)$ occurs, every later prefix vanishes. Thus at most one summand in \eqref{eq:B} is nonzero. For the first two derivatives used below, a zero gate in this region has all derivatives zero. If a summand has a nonzero derivative contribution, its index lies in the support of a derivative of $\ell$; the gate at that index is locally zero and annihilates every later contribution. Thus at most one index contributes to the value, gradient, or Hessian at a given point. It remains to bound the first two derivatives of an arbitrary prefix product $P=\prod_{j=1}^m a(z_j)$ uniformly in $m$.

Put $M_a=\norm{a''}_\infty$. Nonnegativity of $a$ and $1-a$, applied to their quadratic Taylor upper bounds, gives
\[
 |a'(t)|^2\le2M_a\min\{a(t),1-a(t)\}\le4M_a a(t)(1-a(t)).
\]
Write $C_a=4M_a$. The elementary identity/inequality
\begin{equation}\label{eq:exact-one}
 \sum_{j=1}^m(1-a_j)\prod_{k\ne j}a_k\le1,\qquad a_j\in[0,1],
\end{equation}
is the probability of exactly one failure in independent Bernoulli trials with success probabilities $a_j$. It implies
\[
 \norm{\nabla P}^2
 \le C_a\sum_j a_j(1-a_j)\prod_{k\ne j}a_k^2\le C_a.
\]
For positive $a_j$, put $b_j=a'_j/a_j$. The off-diagonal part of $\nabla^2P$ is
$P[bb^\top-\diag(b_j^2)]$. Its operator norm is at most
\[
 2P\sum_j b_j^2\le 2C_a\sum_j(1-a_j)\prod_{k\ne j}a_k\le2C_a.
\]
The diagonal part has norm at most $M_a$. The bounds extend to zero $a_j$ by continuity. Multiplying $P$ by the one possibly active $\ell(z_i)$ gives, for example,
\[
 C_1=B_0\sqrt{C_a}+\norm{\ell'}_\infty,
\quad C_2=B_0(M_a+2C_a)+2\sqrt{C_a}\norm{\ell'}_\infty+\norm{\ell''}_\infty.
\]
This proves the lemma, including at boundaries of the support where all relevant cutoff derivatives vanish.
\end{proof}

\begin{lemma}[The descent part and the frontier gradient]\label{lem:chain}
There are absolute constants $C_A,C_G,C_L$, independent of $T$, with
\begin{align}
 |A_T(z)|&\le C_A T,& \norm{\nabla A_T(z)}&\le C_G\sqrt T,&
 \norm{\nabla^2 A_T(z)}&\le C_L.\label{eq:A-bounds}
\end{align}
Moreover:
\begin{enumerate}[label=(\roman*)]
\item If $|z_i|\le\delta$, then $\partial_i A_T(z)=0$, irrespective of the other coordinates.
\item If $k$ is the first index with $|z_k|<1$, then
\begin{equation}\label{eq:frontier-gradient}
 \partial_k H_T(z)\le-1.
\end{equation}
\item For $0\le k\le T$, if $|z_j|\le\delta$ for all $j>k$, the vector $\nabla A_T(z)$ is supported in the first $k$ coordinates and, on those coordinates, depends only on $z_1,\ldots,z_k$. Under the same condition, $B_T(z)$ and $\nabla B_T(z)$ depend only on the first $\min\{T,k+1\}$ coordinates and the gradient is supported there.
\end{enumerate}
\end{lemma}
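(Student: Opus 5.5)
The plan is to work term by term with the explicit formula \eqref{eq:A}, using two facts about the fixed scalar functions: $r$ is constant on $[-\delta,\delta]$, and $r'=M_{\rm ch}$ on $1/8\le|t|\le1$. Together with the properties $\Psi(t)=0$ for $t\le1/2$ and $\Psi(t)=1$ for $t\ge1$, these give all three bounds and all three structural claims.

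\textbf{Bounds \eqref{eq:A-bounds}.} Each of the $T$ summands is bounded by $2\norm r_\infty$, which gives the value bound. The coordinate $z_i$ enters only the summands with indices $i$ and $i+1$. Hence $\partial_iA_T$ is a combination of at most four products of $\Psi,\Psi',r,r'$, each bounded by an absolute constant. Summing squares over $i$ then gives $\norm{\nabla A_T}\le C_G\sqrt T$. The Hessian is tridiagonal with uniformly bounded entries, since each involves $\Psi'',r''$ or mixed products of bounded functions. Its operator norm is therefore at most three times the maximal entry, giving $C_L$.

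\textbf{Claim (i).} If $|z_i|\le\delta$, then $r'(\pm z_i)=0$, so the index-$i$ summand contributes nothing to $\partial_i$. The index-$(i+1)$ summand contributes $-\Psi'(-z_i)r(-z_{i+1})-\Psi'(z_i)r(z_{i+1})$. Here $\Psi'(\pm z_i)=0$ because $|z_i|\le\delta<1/2$ lies in the flat-zero region of $\Psi$.

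\textbf{Claim (iii).} By (i), $\partial_jA_T=0$ for every $j>k$. For $i\le k$, the quantity $\partial_iA_T$ involves $z_{i-1},z_i,z_{i+1}$. The only possible dependence on $z_{k+1}$ occurs for $i=k$, through $\Psi'(\pm z_k)r(\pm z_{k+1})$. Since $|z_{k+1}|\le\delta$ and $r$ is constant on $[-\delta,\delta]$, this equals the constant $r(0)$, so that dependence disappears. For $B_T$, note that $|z_j|\le\delta<1/8$ gives $\ell(z_j)=z_j$, but also $a(z_j)=0$ with all derivatives locally zero. If $k<T$, then $a(z_{k+1})=0$ locally, so every prefix $P_{i-1}$ with $i-1\ge k+1$ vanishes identically near $z$. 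Only the summands $i\le k+1$ survive, and they involve $z_1,\ldots,z_{k+1}$; this holds locally, so it holds for the gradient as well.

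\textbf{Claim (ii).} This is the main step. Let $k$ be the first index with $|z_k|<1$, so $|z_j|\ge1$ for all $j<k$. This gives $a(z_j)=1$ and $P_{k-1}=1$, and also $\Psi'(\pm z_{k-1})=0$, since $\Psi$ is locally constant beyond $1$. I split into two cases.

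\emph{Case $|z_k|\ge1/8$.} Here $\ell$ and its derivative vanish at $z_k$ once $|z_k|\ge1/4$; between $1/8$ and $1/4$ they are bounded by $B_0$ and $\norm{\ell'}_\infty$. Take $z_{k-1}\ge1$ (the other sign is symmetric), so $\Psi(z_{k-1})=1$ and $\Psi(-z_{k-1})=0$. The $A_T$ contribution is then $-r'(z_k)=-M_{\rm ch}$. For $k=1$ it is also $-r'(z_1)=-M_{\rm ch}$, using $r'$ even. The index-$(k+1)$ summand contributes $-\Psi'(-z_k)r(-z_{k+1})-\Psi'(z_k)r(z_{k+1})$. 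This term is $\le0$, because $\Psi'\ge0$ and $r\ge0$ (I would normalize $r\ge0$ as stated). The $B_T$ derivative in coordinate $k$ is $-\ell'(z_k)-\sum_{i>k}P_{i-1}$-terms containing $a'(z_k)\ell(z_i)$. By the one-active-summand argument of Lemma~\ref{lem:B-bounds}, this is bounded by $\norm{\ell'}_\infty+B_0\norm{a'}_\infty$. By the choice \eqref{eq:M-choice}, the total is therefore at most $-2$.

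\emph{Case $|z_k|<1/8$.} The $A$-part is again $\le0$, because $r'\ge0$ and the $\Psi'$ terms are nonnegative. Since $a(z_k)=0$ locally, the $B$-part is exactly $-\ell'(z_k)=-1$.

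The delicate points are the sign bookkeeping for $z_{k-1}\le-1$ and the transition band $1/8\le|z_k|\le1/4$. The constant $M_{\rm ch}$ was chosen precisely to absorb the $B_T$ terms in that band.
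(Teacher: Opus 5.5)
Your proposal is correct and follows essentially the same route as the paper. It uses term-by-term differentiation of \eqref{eq:A} with the flat regions of $r$ and $\Psi$, the vanishing tail gate $a(z_{k+1})$ for $B_T$, and the same split $|z_k|<1/8$ versus $1/8\le|z_k|<1$ in (ii), where $M_{\rm ch}$ absorbs $\norm{\ell'}_\infty+B_0\norm{a'}_\infty$; one cosmetic point is that evenness of $r'$ is what handles the sign $z_{k-1}\le-1$ (where the self-link gives $-r'(-z_k)$), not the case $k=1$, where $-r'(z_1)$ appears directly.
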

\begin{proof}
Each term of $A_T$ involves two adjacent coordinates and fixed bounded scalar functions. Its Hessian is tridiagonal with uniformly bounded entries; the row-sum bound proves \eqref{eq:A-bounds}.
Because $r,r',\Psi,\Psi'$ are nonnegative, every partial derivative of $A_T$ is nonpositive. Its self-link contribution is
\[
 -\Psi(z_{i-1})r'(z_i)-\Psi(-z_{i-1})r'(-z_i)
\]
(with the first term $-r'(z_1)$ at $i=1$), and its next-link contribution is
\[
 -\Psi'(z_i)r(z_{i+1})-\Psi'(-z_i)r(-z_{i+1}).
\]
Both vanish when $|z_i|\le\delta$, proving (i).

For (ii), every preceding gate equals one. The preceding self-link therefore contributes exactly $-r'(z_k)$, since $r'$ is even. Splitting \eqref{eq:B} at $k$ gives
\[
 \partial_k B_T(z)=-\ell'(z_k)-a'(z_k)\mathcal S,
\]
where $\mathcal S$ is a suffix prefix-sum of the same type, and $|\mathcal S|\le B_0$ by the argument for Lemma~\ref{lem:B-bounds}.
When $|z_k|\le1/8$, $\ell'=1$ and $a'=0$, and all contributions from $A_T$ are nonpositive. When $1/8\le|z_k|<1$, use $r'=M_{\rm ch}$ and \eqref{eq:M-choice}. This proves \eqref{eq:frontier-gradient}.

For (iii), the case $k=T$ is immediate. Suppose $k<T$. $r$ is constant and $r'$ is zero on the tail interval, and both $\Psi$ and $\Psi'$ vanish there. The derivative in coordinate $k$ may involve $r(\pm z_{k+1})$, but these are the same fixed constant throughout that interval. For $B_T$, the first tail gate $a(z_{k+1})$ and all its derivatives vanish. All later summands and gradient components are therefore zero. The remaining terms use at most $z_{k+1}$.
\end{proof}

\section{Controlling arbitrary query norms}\label{sec:wrapper}

Enlarge a fixed constant $C_H\ge1$ so that
$\norm{\nabla H_T(z)}\le C_H\sqrt T$ for every $T,z$.
Let
\[
 \eta_0=1/16,\qquad \mathcal R=64C_H\sqrt T,\qquad
 \rho(w)=\frac{w}{\sqrt{1+\norm w^2/\mathcal R^2}}.
\]
The symbol $\mathcal R$ is unrelated to the initial inner-error budget $R_0$.
For an orthonormal frame $U=(u_1,\ldots,u_T)\in\R^{d\times T}$, set
\begin{align}
 \mathcal A_U(w)&=A_T(U^\top\rho(w))+\frac{\eta_0}{2}\norm w^2,\label{eq:Awrapper}\\
 \mathcal B_U(w)&=B_T(U^\top\rho(w)),\qquad
 \mathcal H_U=\mathcal A_U+\mathcal B_U.\label{eq:Bwrapper}
\end{align}

\begin{lemma}[Bounds and nonstationarity after wrapping]\label{lem:wrapper}
Uniformly in $T,d,U$,
\begin{align}
 |\mathcal B_U|&\le B_0,& \norm{\nabla\mathcal B_U}&\le C_B,&
 \norm{\nabla^2\mathcal B_U}&\le C_{BB},\label{eq:wrappedB}\\
 \norm{\nabla^2\mathcal A_U}&\le C_{AA}.\label{eq:wrappedA}
\end{align}
If some coordinate of $U^\top\rho(w)$ has absolute value below one, then
\begin{equation}\label{eq:wrapped-gradient}
 \norm{\nabla\mathcal H_U(w)}\ge1/2.
\end{equation}
Also $\mathcal H_U(0)-\inf\mathcal H_U\le C_\Delta T$ for a fixed $C_\Delta>0$.
\end{lemma}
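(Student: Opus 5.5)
The plan is to compute the derivatives of the radial map $\rho$ explicitly, transfer the bounds of Lemmas~\ref{lem:B-bounds} and~\ref{lem:chain} through the chain rule, and then split the nonstationarity claim into a small-norm and a large-norm regime.

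\textbf{Derivatives of $\rho$.} Write $\phi(w)=(1+\norm w^2/\mathcal R^2)^{-1/2}$, so $\rho(w)=\phi(w)w$. A direct computation gives
\[
 D\rho(w)=\phi\,(I-\phi^2 ww^\top/\mathcal R^2),
\]
whose eigenvalues are $\phi$ (on $w^\perp$) and $\phi^3$ (along $w$). Hence $\norm{D\rho}\le1$ and $D\rho$ is positive definite. Also $\norm{\rho(w)}\le\min\{\norm w,\mathcal R\}$. The second derivative is homogeneous of degree $-1$ in the scale $\mathcal R$: with $u=w/\mathcal R$,
\[
 \norm{D^2\rho(w)}\le C\,\frac{\norm u}{(1+\norm u^2)^{3/2}}\,\mathcal R^{-1}\le C/\mathcal R .
\]

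\textbf{Bounds \eqref{eq:wrappedB} and \eqref{eq:wrappedA}.} The chain rule gives
\[
 \nabla^2(F\circ U^\top\rho)=D\rho^\top U\nabla^2F\,U^\top D\rho+\sum_i(U\nabla F)_i\nabla^2\rho_i .
\]
For $F=B_T$, Lemma~\ref{lem:B-bounds} bounds the first term by $C_2$ and the second by $C_1\norm{D^2\rho}\le C$. For $F=A_T$, the first term is at most $C_L$. The second term is where the choice of scale matters: it is at most $C_G\sqrt T\cdot C/\mathcal R$, and this is $O(1)$ precisely because $\mathcal R=64C_H\sqrt T$. Adding $\eta_0 I$ completes \eqref{eq:wrappedA}. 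The value and gradient bounds for $\mathcal B_U$ follow from $\norm{D\rho}\le1$ and $U^\top U=I$.

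\textbf{Nonstationarity \eqref{eq:wrapped-gradient}.} We have $\nabla\mathcal H_U=D\rho\,U g+\eta_0w$ with $g=\nabla H_T(z)$, $z=U^\top\rho(w)$. Let $k$ be the first index with $|z_k|<1$, so $g_k\le-1$ by \eqref{eq:frontier-gradient}. I would split into two regimes.

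In the small-norm regime $\norm w\le\mathcal R$, take the inner product with $v=\rho(w)$-independent direction $Ue_k$. Write $D\rho=\phi I-E$ with $E=\phi^3ww^\top/\mathcal R^2$ and $\phi\ge2^{-1/2}$. Then $\langle Ue_k,\phi Ug\rangle\le-\phi$. The correction $\langle Ue_k,EUg\rangle$ involves $\langle w,Ue_k\rangle\langle w,Ug\rangle/\mathcal R^2$. Here $\langle w,Ue_k\rangle=z_k/\phi$, so $|\langle w,Ue_k\rangle|\le\sqrt2$, while $|\langle w,Ug\rangle|\le\mathcal R\cdot C_H\sqrt T$. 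The correction is therefore at most $\sqrt2C_H\sqrt T/\mathcal R=O(1/64)$. The quadratic term contributes $\eta_0\langle w,Ue_k\rangle\le\sqrt2/16$. Altogether $\langle Ue_k,\nabla\mathcal H_U\rangle\le-1/\sqrt2+1/40+\sqrt2/16<-1/2$, which gives \eqref{eq:wrapped-gradient}.

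In the large-norm regime $\norm w>\mathcal R$, compute $\langle w,\nabla\mathcal H_U\rangle=\phi^3\langle w,Ug\rangle+\eta_0\norm w^2$. The first term is at least $-\phi^3\norm wC_H\sqrt T$, and $\phi^3\norm w\le\mathcal R$. The inner product is thus at least $\norm w^2/16-\mathcal R^2/64\ge3\norm w^2/64$, so $\norm{\nabla\mathcal H_U}\ge3\norm w/64\ge3C_H\sqrt T\ge1/2$.

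\textbf{Gap bound.} Since $A_T,B_T$ are bounded by $C_AT+B_0$ and the quadratic term is nonnegative, $\inf\mathcal H_U\ge-(C_AT+B_0)$. Combined with $|\mathcal H_U(0)|\le C_AT+B_0$, this gives $C_\Delta T$.

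\textbf{Main obstacle.} The delicate step is the small-norm case: one must check that the anisotropic part of $D\rho$ does not rotate the large $O(\sqrt T)$ gradient into the frontier direction. This is exactly why $\mathcal R$ carries the factor $64C_H\sqrt T$ and why $|z_k|<1$ must control $\langle w,Ue_k\rangle$. The constants in that inequality chain should be rechecked carefully.
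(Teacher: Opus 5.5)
Your proposal is correct and follows essentially the paper's proof. You transfer the bounds through the chain rule using $\norm{D\rho}\le1$ and $\norm{D^2\rho}\le C/\mathcal R$, which absorbs the $\sqrt T$ in $\nabla A_T$. You then split on $\norm w$, and in the small-norm case you project onto the frontier direction $u_k$ using $\ip{w}{u_k}=z_k/\phi$; your constant chain $-1/\sqrt2+\sqrt2/64+\sqrt2/16<-1/2$ does check out.

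The only differences are cosmetic. You split at $\norm w=\mathcal R$ rather than $\mathcal R/2$. You also handle the large-norm case via the eigen-direction $D\rho\,w=\phi^3w$, whereas the paper uses the direct triangle inequality $\norm{\nabla\mathcal H_U}\ge\eta_0\norm w-\norm q$; that inequality would also work at your threshold.
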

\begin{proof}
Writing $\gamma=(1+\norm w^2/\mathcal R^2)^{-1/2}$ gives
\[
 D\rho(w)=\gamma I-\frac{\gamma^3}{\mathcal R^2}ww^\top,
 \qquad \norm{D\rho}\le1,\qquad \norm{D^2\rho}\le C/\mathcal R.
\]
The chain rule, Lemma~\ref{lem:B-bounds}, and \eqref{eq:A-bounds} prove \eqref{eq:wrappedB}--\eqref{eq:wrappedA}; the factor $\sqrt T$ in $\norm{\nabla A_T}$ is canceled by $\mathcal R$.

Let $v=U^\top\rho(w)$ and $q=U\nabla H_T(v)$, so $\norm q\le C_H\sqrt T$. Then
\begin{equation}\label{eq:wrapper-grad-formula}
 \nabla\mathcal H_U(w)=\gamma q+
 w\left(\eta_0-\frac{\gamma^3}{\mathcal R^2}\ip wq\right).
\end{equation}
If $\norm w\ge\mathcal R/2$, the triangle inequality applied before expanding $D\rho$ gives
\[
 \norm{\nabla\mathcal H_U(w)}\ge\eta_0\norm w-\norm q
 \ge C_H\sqrt T\ge1.
\]
Otherwise $\gamma\ge2/\sqrt5$. Choose the first $k$ with $|v_k|<1$. By Lemma~\ref{lem:chain}, $q\cdot u_k\le-1$, and
$|w\cdot u_k|=|v_k|/\gamma\le\sqrt5/2$. The absolute value of the parenthesized scalar in \eqref{eq:wrapper-grad-formula} is at most
\[
 \frac1{16}+\frac{(\mathcal R/2)C_H\sqrt T}{\mathcal R^2}
 =\frac9{128}.
\]
Hence the $u_k$ component is at most
$-2/\sqrt5+(\sqrt5/2)(9/128)<-1/2$.
Finally, $A_T$ is bounded by $C_AT$, $B_T$ is bounded, and the quadratic is nonnegative. The gap claim follows, after enlarging $C_\Delta$ to cover every $T\ge1$.
\end{proof}

The derivative-zero and prefix-dependence conclusions in Lemma~\ref{lem:chain}(iii) remain valid for oracle information after this transformation. The radial factors, $w$, and $\norm w$ are known from the query. For example, $D\rho(w)U\nabla A_T(v)$ is determined by the revealed frame prefix whenever the remaining $v_j$ lie in $[-\delta,\delta]$.

\section{The sine lift and uniform population admissibility}\label{app:embedding}

Appendices~\ref{sec:split}--\ref{sec:wrapper} establish the split-chain and bounded-radius construction. Its output, for every chain length $T$ and orthonormal frame $U$, is a pair of smooth functions $\mathcal A_U,\mathcal B_U$ with constants independent of $T,d,U$ such that
\begin{align}
 |\mathcal B_U|&\le B_0,&
 \norm{\nabla\mathcal B_U}&\le C_B,&
 \norm{\nabla^2\mathcal B_U}&\le C_{BB},\label{eq:toolboxB}\\
 \norm{\nabla^2\mathcal A_U}&\le C_{AA}.\label{eq:toolboxA}
\end{align}
Their sum $\mathcal H_U=\mathcal A_U+\mathcal B_U$ has a gradient norm at least $1/2$ whenever any coordinate of $U^\top\rho(w)$ has magnitude below one. Also $\mathcal H_U(0)-\inf\mathcal H_U\le C_\Delta T$.
The map $\rho$ has image in a ball of radius $\mathcal R=C_R\sqrt T$.
On the hidden-coordinate interval $[-\delta,\delta]$, where $\delta=1/16$, the gradient of $\mathcal A_U$ reveals no new coordinate, whereas the value and gradient of $\mathcal B_U$ reveal at most the next coordinate.

\subsection{Order-dependent width with order-independent constants}

Choose fixed
\[
 0<b\le\min\{L_0,1\}/2,\qquad m=(\mu+L_1)/2.
\]
A large fixed number $K_0\ge1$ will be chosen below, and set
\begin{equation}\label{eq:tau}
 \tau=K_0^{-3/2}.
\end{equation}
For a small accuracy scale $s\in(0,1)$ define
\begin{equation}\label{eq:width}
 h=\frac{K_0}{\D_{p,\beta}(s)}
   =K_0\sup_{\substack{k\in\N\setminus\{0\}\\k\le p}}
          \frac{s^{1/k}}{k^\beta}.
\end{equation}
This formula includes $\beta=0,p=\infty$, where $h=K_0$.
For every $p$,
\begin{equation}\label{eq:h-basic}
 K_0s\le h\le K_0,\qquad
 s h^{-k}\le\left(\frac{k^\beta}{K_0}\right)^k\quad(1\le k\le p).
\end{equation}
If $p\ge2$, there is also the uniform lower bound
\begin{equation}\label{eq:h-square}
 h\ge K_0s^{1/2}/2^\beta.
\end{equation}
Set
\begin{equation}\label{eq:new-scales}
 \lambda=s/\tau,\qquad A=s^2/\tau,\qquad
 T=\left\lfloor\frac{\Delta\tau}{2C_\Delta s^2}\right\rfloor.
\end{equation}
Enlarge $C_\Delta$ if necessary to bound a gap with any bounded frontier of magnitude at most $B_0$. For sufficiently small $s$, $T\ge1$.

Define, for scalar $y$,
\begin{equation}\label{eq:sine-instance}
 \begin{split}
 f_U(x,y)&=A\mathcal A_U(x/\lambda)+by,\\
 g_U(x,y)&=\frac m2y^2-m\frac Ab h\,
                  \mathcal B_U(x/\lambda)\sin(y/h).
 \end{split}
\end{equation}
Unlike a compactly supported cutoff, the lower-variable dependence in \eqref{eq:sine-instance} is entire. The nonlinear inner root need not equal a linear frontier exactly; its uniformly small effect on the gradient barrier is estimated next.

\subsection{The actual inner root and the actual hypergradient}

Put $c=A/(bh)$. By \eqref{eq:h-basic},
\begin{equation}\label{eq:c-small}
 0<c\le s/(\tau bK_0)\longrightarrow0
 \quad\text{uniformly in }p.
\end{equation}
For $|B|\le B_0$, define $\theta_c(B)$ by
\begin{equation}\label{eq:theta}
 \theta_c(B)=B\cos(c\theta_c(B)).
\end{equation}
When $cB_0<1$, the right side is a contraction in its scalar argument, so the root is unique. It satisfies $|\theta_c(B)|\le|B|$, and implicit differentiation gives
\[
 \theta_c'(B)=\frac{\cos(c\theta_c(B))}
                   {1+cB\sin(c\theta_c(B))}.
\]
For $cB_0\le1/4$, use $|1-\cos z|\le z^2/2$ and $|\sin z|\le|z|$ to obtain
\begin{equation}\label{eq:theta-error}
 \sup_{|B|\le B_0}|\theta_c'(B)-1|\le2c^2B_0^2.
\end{equation}
The population strong convexity verified below implies that the unique inner solution is
\begin{equation}\label{eq:new-root}
 y_U^*(x)=\frac Ab\theta_c(\mathcal B_U(x/\lambda)),\qquad
 \Phi_U(x)=A\left[\mathcal A_U(x/\lambda)
                  +\theta_c(\mathcal B_U(x/\lambda))\right].
\end{equation}
Indeed $g_y=0$ is exactly \eqref{eq:theta} after this substitution.
Equations \eqref{eq:theta-error} and \eqref{eq:toolboxB} show that
\begin{equation}\label{eq:new-barrier}
 \norm{\nabla\Phi_U(x)}\ge s/4
\end{equation}
whenever some coordinate of $U^\top\rho(x/\lambda)$ has magnitude below one, provided $s$ is sufficiently small uniformly in $p$.

\subsection{Checking every population budget}

The upper function is $b$-Lipschitz in $y$ and has Hessian norm at most
$\tau C_{AA}$. All positive-order lower derivatives of its full gradient vanish.
The perturbation blocks in the full Hessian of $g$ satisfy
\begin{align}
 \norm{g_{xx}}&\le(m/b)\tau h C_{BB},\label{eq:gxx}\\
 \norm{g_{xy}}&\le(m/b)s C_B,\label{eq:gxy}\\
 |g_{yy}-m|&\le(m/b)\frac{s^2}{\tau h}B_0.\label{eq:gyy}
\end{align}
The derivatives needed for the two joint $L_2$ conditions satisfy
\begin{align}
 \norm{D_xg_{xy}}&\le(m/b)\tau C_{BB},&
 \norm{D_yg_{xy}}&\le(m/b)(s/h)C_B,\label{eq:joint-new1}\\
 \norm{D_xg_{yy}}&\le(m/b)(s/h)C_B,&
 |D_yg_{yy}|&\le(m/b)(s^2/(\tau h^2))B_0.\label{eq:joint-new2}
\end{align}
By \eqref{eq:h-basic} and \eqref{eq:tau},
\[
 \tau h\le K_0^{-1/2},\quad \tau=K_0^{-3/2},\quad
 s/h\le K_0^{-1},\quad s^2/(\tau h^2)\le K_0^{-1/2}.
\]
Consequently a sufficiently large fixed $K_0$ makes $\tau C_{AA}\le L_1$, the bound \eqref{eq:gxx} an arbitrarily small fraction of $L_1-m$, and the sums of the two bounds in each of \eqref{eq:joint-new1}--\eqref{eq:joint-new2} at most $L_2$. Equations \eqref{eq:gxy}--\eqref{eq:gyy} then become arbitrarily small uniformly in $p$ as $s\downarrow0$, because
$s^2/(\tau h)\le s/(\tau K_0)$.
This proves global $\mu$-strong convexity and full-gradient $L_1$-Lipschitzness for small $s$. This argument also covers the nonshrinking width $h=K_0$ at $\beta=0,p=\infty$.

For $k=2,\ldots,p$, the derivative controlling
$\Lip_y(D_y^k\nabla g)$ is $D_y^{k+1}\nabla g$.
All derivatives of sine and cosine have absolute value at most one, so
\begin{equation}\label{eq:all-order-raw}
 \norm{D_y^{k+1}\nabla g_U}
 \le\frac mb\left(C_B+\frac{s}{\tau h}B_0\right)s h^{-k}.
\end{equation}
If $p\ge2$, \eqref{eq:h-square} implies
$s/(\tau h)\le2^\beta\sqrt s/(\tau K_0)\le1$ for a sufficiently small $s$, with a threshold independent of $p$.
Put $C_0=(m/b)(C_B+B_0)$. The factorial inequality $k!\ge(k/e)^k$ and \eqref{eq:h-basic} give
\begin{equation}\label{eq:all-order-envelope}
 \norm{D_y^{k+1}\nabla g_U}
 \le C_0\left(\frac{e^\beta}{K_0}\right)^k(k!)^\beta
 \le M\Lambda^{k+1}((k+1)!)^\beta=L_{k+1},
\end{equation}
after increasing the same fixed $K_0$ so that
\[
 \frac{e^\beta}{K_0\Lambda}\le
 \min\{1/2,M\Lambda/(2C_0)\}.
\]
This one choice proves \emph{all} required orders simultaneously, even when $p=\infty$. There are no separate accuracy thresholds indexed by the derivative order.
For $p=1$, no bound in \eqref{eq:all-order-envelope} is required and the preceding baseline checks already apply.

At $x=0$, $\mathcal B_U(0)=0$, so $y_U^*(0)=0$. The initial lower error is zero. Since $|\theta_c(B)|\le B_0$ and the extensive potential has range $O(T)$ after subtracting the nonnegative radial term,
\[
 \Phi_U(0)-\inf\Phi_U\le A C_\Delta T\le\Delta/2.
\]
Thus \eqref{eq:sine-instance} is admissible for a single accuracy threshold independent of $p$.

\section{Global oracle validity and unrestricted lower bound}\label{sec:random}

\subsection{Two fixed sample losses}

Let $b_*(y)=(0,my)$ and $v_U(x,y)=\nabla g_U(x,y)-b_*(y)$.
The complete full-gradient signal obeys
\begin{equation}\label{eq:new-signal}
 \norm{v_U(x,y)}\le\frac mb\left(C_Bsh+B_0A\right)
 \le C_*sh\quad\text{for every real }x,y,
\end{equation}
where $C_*=(m/b)(C_B+B_0/(\tau K_0))$ is fixed.
For sufficiently small $s$, let
\begin{equation}\label{eq:new-q}
 q_s=C_*^2s^2h^2/\sigma^2\in(0,1).
\end{equation}
Each $g$ call draws a fresh Bernoulli$(q_s)$ coin $Z$ and returns
\begin{equation}\label{eq:new-oracle}
 G_g(x,y;Z)=b_*(y)+\frac Z{q_s}v_U(x,y).
\end{equation}
Every $f$ call returns its exact gradient. Directly,
\[
 \E G_g=\nabla g_U,\qquad
 \E\norm{G_g-\nabla g_U}^2
 =(q_s^{-1}-1)\norm{v_U}^2\le\sigma^2.
\]
The oracle is even the gradient of the fixed random sample loss
\[
 g_{U,Z}(x,y)=\frac m2y^2-\frac Z{q_s}m\frac Ab h\,
                   \mathcal B_U(x/\lambda)\sin(y/h).
\]
The two losses and their probabilities are selected before the algorithm runs. Population regularity, not samplewise uniform smoothness or samplewise convexity, is the requirement in the model. In particular, the oracle is unbiased globally, not only near $y^*(x)$.

\subsection{A spherical-cap bound}

\begin{lemma}\label{lem:cap-new}
If $u$ is uniform on the unit sphere in an $m_0$-dimensional Euclidean space and $z$ is fixed with $\norm z\le R$, then
\[
 \Prob\{|\ip uz|>a\}\le2\exp\left(-\frac{m_0a^2}{2R^2}\right).
\]
\end{lemma}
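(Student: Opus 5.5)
The plan is to use the standard Gaussian representation of the uniform sphere measure, together with a Gaussian tail bound. By rotation invariance, $\ip uz$ has the same law as $\norm z\,u_1$, where $u_1$ is the first coordinate of $u$. It therefore suffices to show
\[
 \Prob\{|u_1|>t\}\le 2\exp(-m_0t^2/2),\qquad t=a/\norm z\ge a/R .
\]
This reduction uses that the right side is decreasing in $t$, so replacing $\norm z$ by the larger $R$ only weakens the bound. The case $z=0$ is trivial, and if $a\ge \norm z$ the probability is zero.

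For the one-coordinate bound, the cleanest route is the exact marginal. The density of $u_1$ on $[-1,1]$ is proportional to $(1-t^2)^{(m_0-3)/2}$. A direct tail integration then gives $\Prob\{u_1>t\}\le (1-t^2)^{(m_0-1)/2}\le e^{-(m_0-1)t^2/2}$, which loses a factor in the exponent.

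To obtain exactly $m_0/2$, I would instead use the classical geometric cap argument. The set $\{u:u_1>t\}$ lies inside the sphere's intersection with a ball of radius $\sqrt{1-t^2}$ centred at $te_1$. Comparing the cone volume over this cap with the volume of the unit ball gives
\[
 \Prob\{u_1>t\}\le (1-t^2)^{m_0/2}\le e^{-m_0t^2/2}.
\]
This is the Ball (1997) estimate, valid for $t\in[0,1)$ (for $t\ge1/\sqrt2$ one uses the half-ball containing cone instead). Symmetry in $\pm u_1$ then supplies the factor $2$.

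The main obstacle is getting the constant $m_0/2$ rather than $(m_0-1)/2$ or $m_0/4$, which the crude density or Gaussian-normalization arguments give. If the cone-volume comparison proves awkward for large $t$, I would fall back to the following. For $t\ge1/\sqrt2$, bound the cap by a half-ball of radius $1/(2t)$ around $e_1/(2t)$, giving the bound $(2t)^{-m_0}$. Then check that $(2t)^{-m_0}\le e^{-m_0t^2/2}$ on $[1/\sqrt2,1]$. This holds because the inequality $-\log(2t)\le -t^2/2$ on that range reduces to an elementary one-variable check at the endpoints, using monotonicity of $\log(2t)-t^2/2$ there.
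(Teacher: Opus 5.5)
Your argument is correct in substance, but it takes a different route from the paper. The paper never reduces to a cap. It writes a standard Gaussian vector as $Ru$ with $R$ independent of $u$ and computes the exact even moments $\E\ip{u}{e}^{2k}=(2k-1)!!/[m_0(m_0+2)\cdots(m_0+2k-2)]$. These are termwise at most $(2k-1)!!/m_0^k$, the moments of an $N(0,1/m_0)$ variable, so $\E e^{t\ip{u}{z}}\le e^{t^2\norm z^2/(2m_0)}$. A Chernoff bound on each tail then gives exactly $m_0/2$ for every $a$, with no case analysis, and the sub-Gaussian moment generating function comes as a by-product. Your remark that Gaussian arguments lose the constant therefore applies to normalizing a Gaussian and controlling its norm, not to this moment comparison. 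Your route uses rotation invariance to pass to $u_1$, then Ball's cone-volume estimate, based on the fact that the normalized surface measure of a cap $C$ equals the volume fraction of the cone $\{rx:0\le r\le1,\ x\in C\}$. It is more elementary and geometric, but it needs a split at $t=1/\sqrt2$.

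That split is not optional, although you present the second case as a fallback. The ball of radius $\sqrt{1-t^2}$ about $te_1$ contains the cap for every $t<1$, but it contains the cone only if it contains the origin, i.e.\ only for $t\le1/\sqrt2$. The bound $(1-t^2)^{m_0/2}$ is genuinely false near $t=1$: for $m_0=2$ and $t=0.99$ the cap has measure $\arccos(0.99)/\pi\approx0.045$, which exceeds $1-t^2=0.0199$. So drop ``valid for $t\in[0,1)$'' and make the second case part of the proof. There the region is a full ball, not a half-ball: radius $1/(2t)$, centred at $e_1/(2t)$. The origin lies on its boundary, and for cap points $\norm{x-e_1/(2t)}^2=1-x_1/t+1/(4t^2)\le1/(4t^2)$, which is what yields $(2t)^{-m_0}$. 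Finally, $\phi(t)=\log(2t)-t^2/2$ has $\phi'(t)=1/t-t\ge0$ on $(0,1]$, so checking $\phi(1/\sqrt2)=\tfrac12\log2-\tfrac14>0$ suffices. With these corrections your proof is complete.
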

\begin{proof}
To compute the spherical moments, write a standard Gaussian vector in polar form $Z=Ru$. Its rotationally invariant density factors into a radial density and the uniform surface measure, so $R$ and $u$ are independent. One-dimensional Gaussian integration by parts gives $\E Z_1^{2k}=(2k-1)!!$, while integrating the radial density $r^{m_0-1}e^{-r^2/2}$ gives $\E R^{2k}=m_0(m_0+2)\cdots(m_0+2k-2)$. Dividing and rotating proves
$\E[\ip{u}{e}^{2k}]=(2k-1)!!/[m_0(m_0+2)\cdots(m_0+2k-2)]$
for a unit vector $e$. Odd moments vanish by symmetry. Expanding the exponential therefore gives
$\E e^{t\ip uz}\le e^{t^2\norm z^2/(2m_0)}$.
Exponential Markov bounds for the positive and negative tails, optimized in $t$, prove the claim.
\end{proof}

\subsection{A deferred-prefix simulation}

Fix any adaptive randomized algorithm with a fixed budget at most $N$; include its seed in the probability space and pad early stopping with unused calls. Draw $U$ uniformly from the orthonormal $T$-frames in $\R^d$. Independently attach Bernoulli$(q_s)$ coins $Z_1,\ldots,Z_N$ to all rounds, including $f$ rounds. A coin on an $f$ round is a bookkeeping device only. Write
$k_t=\min\{T,\sum_{i\le t}Z_i\}$.

Construct a simulation whose transcript before round $t$ is measurable using only the seed, preceding coins, and $u_1,\ldots,u_{k_{t-1}}$. For an upper query, use the gradient of the composite extensive potential with all later latent coordinates set to the constant zero. The known radial term and the scalar $f_y=b$ are retained. For a failed lower query return $b_*(y)$. For a successful lower query, reveal the next column to the simulation and use the bounded frontier with later latent coordinates set to zero. Differentiate this \emph{truncated composite function}; artificial zero coordinates are constants, so no unrevealed gradient component is retained. On a successful auxiliary coin at an upper query, reveal one more column to the simulation after sending the upper response. The algorithm itself is never directly handed a frame column. Once $k_t=T$, use the full formulas.

Appendix~\ref{sec:split} proves the needed prefix properties. They show inductively that this simulation has the claimed measurability. Conditional on its current prefix, every unseen frame column is marginally uniform on the unit sphere in the orthogonal complement, of dimension at least $d-T$. Since $\norm{\rho(x/\lambda)}\le\mathcal R$, Lemma~\ref{lem:cap-new} and a union bound imply that the probability of any unseen coordinate exceeding $\delta$ at any query or the final output is at most
\begin{equation}\label{eq:new-bad-event}
 2(N+1)T\exp\left(-\frac{(d-T)\delta^2}{2\mathcal R^2}\right).
\end{equation}
This calculation is performed in the unconditionally prefix-measurable simulation; it does not condition a future random direction on a previous no-failure event.

On the complement of this bad event, the simulation and real transcripts agree inductively. An upper gradient depends only on the old prefix; a failed lower gradient is $b_*(y)$; and a successful lower gradient needs at most the old prefix plus one column. Sine and cosine depend on the chosen scalar $y$, not on the frame, so the conclusion holds for every size and sign of the lower query. The bounded-radius map handles every upper-query norm. The argument also includes any final output not previously queried.

Choose
\[
 d\ge T+\frac{2\mathcal R^2}{\delta^2}\log(16(N+1)T).
\]
Then \eqref{eq:new-bad-event} is at most $1/8$. If $Nq_s\le T/8$, Markov's inequality gives
$\Prob\{\sum_{t=1}^N Z_t\ge T\}\le1/8$.
With probability at least $3/4$, a column remains unseen at the output and its output coordinate has magnitude at most $\delta<1$. Equation~\eqref{eq:new-barrier} gives
\begin{equation}\label{eq:new-expected-lower}
 \E_{U,\mathrm{oracle},\mathrm{seed}}\norm{\nabla\Phi_U(\widehat x)}
 \ge3s/16.
\end{equation}
Averaging implies that at least one fixed frame has this lower bound on the algorithm's expected gradient norm. No zero-respecting restriction has been imposed.

\subsection{Uniform comparison with the target rate}

Take $s=16\eps$. The admissibility threshold established above is uniform in $p$, and \eqref{eq:new-expected-lower} becomes $3\eps$. Since $T\ge c s^{-2}$ for small $s$,
\[
 \frac{T}{8q_s}\ge c s^{-4}h^{-2}
       =c s^{-4}\D_{p,\beta}(s)^2.
\]
For any fixed $a\ge1$ with $a\eps<1$,
\[
 a^{-1}\D_{p,\beta}(\eps)
 \le\D_{p,\beta}(a\eps)\le\D_{p,\beta}(\eps),
\]
because $a^{-1}\le a^{-1/r}\le1$ term by term, also when the infimum is not attained. Therefore every sufficiently small fixed budget
$c\eps^{-4}\D_{p,\beta}(\eps)^2$ fails on an admissible fixed instance.
This proves the lower half of Theorem~\ref{thm:uniform}.
Finally, $\D_{p,\beta}(\eps)\le\eps^{-1}$ by choosing $r=1$, so the lower-bound budget is at most a fixed multiple of $\eps^{-6}$. Since $\mathcal R^2=O(T)$, the specified dimension is uniformly
$O(\eps^{-2}\log(1/\eps))$.

\section{Consequences and the quantifiers in the main results}\label{app:consequences}\label{sec:qualitative}

\begin{table}[ht]
\centering\small
\caption{Consequences under the same fresh-gradient contract and nondegenerate baseline budgets. Constants are uniform in order only in the envelope rows. The displayed rates retain all logarithmic factors.}\label{tab:regimes}
\begin{tabularx}{\textwidth}{@{}Y l@{}}
\toprule
Population regularity in the lower variable & Minimax fixed sample budget\\
\midrule
Baseline ($p=1$) & $\Theta(\eps^{-6})$\\
Arbitrary fixed finite list, order $p$ & $\Theta_p(\eps^{-4-2/p})$\\
Envelope $M\Lambda^j(j!)^\beta$, any $p$ & $\Theta(\eps^{-4}\D_{p,\beta}(\eps)^2)$\\
Infinite exponential envelope ($\beta=0$) & $\Theta(\eps^{-4})$\\
Infinite analytic envelope ($\beta=1$) & $\Theta(\eps^{-4}\log^2(1/\eps))$\\
Infinite factorial-power envelope, fixed $\beta>0$ & $\Theta_\beta(\eps^{-4}\log^{2\beta}(1/\eps))$\\
Qualitative $C^\infty$, even entire in $y$, without an envelope & $\Theta(\eps^{-6})$\\
\bottomrule
\end{tabularx}
\end{table}
\subsection{Arbitrary fixed finite budgets are preserved}
We give the class-inclusion proof of Corollary~\ref{thm:fixed}. Fix $p\ge2$ and positive $L_3,\ldots,L_{p+1}$. Put
\[
 M_-=\min_{3\le j\le p+1}L_j,\qquad
 M_+=\max_{3\le j\le p+1}L_j.
\]
Keep all baseline budgets, including $L_2$, unchanged, and let $\cF^-$ and $\cF^+$ be the classes in Definition~\ref{def:profile} with $\beta=0$, $\Lambda=1$, and envelope constants $M_-$ and $M_+$, respectively. The arbitrary finite-list class $\cF^{\rm list}$ satisfies
\[
 \cF^-\subseteq\cF^{\rm list}\subseteq\cF^+.
\]
For every $q$ in~\eqref{eq:high}, these are exactly the componentwise inequalities between the derivative budgets; the $q=1$ bound on the $f$ derivative still uses the common baseline $L_2$. Oracle and initialization contracts do not change. Hence their minimax complexities obey
\[
 Q^-(\eps)\le Q^{\rm list}(\eps)\le Q^+(\eps).
\]
Theorem~\ref{thm:uniform}, applied twice with $\beta=0$, gives the same exponent $4+2/p$ on both sides. The constants may depend on $M_-$ and $M_+$, and therefore on the fixed list and $p$, as claimed. For $p=1$, the additional list is empty and Theorem~\ref{thm:uniform} applies directly.

The lower instances have exact $f$ gradients for every choice of envelope. Thus the same inclusion and upper bound apply when every upper gradient is exact. This establishes the lower-only-noise version as well. No comparison across orders with different envelope constants is made in this argument.

\subsection{Optimizing the order}
Let $t=\log(1/\eps)$. For $\beta>0$, set $\phi(r)=\beta\log r+t/r$ for real $r\ge1$. Its derivative is
\[
 \phi'(r)=\frac{\beta r-t}{r^2}.
\]
For sufficiently small $\eps$, $r_*=t/\beta\ge1$. If the finite integer $p\le r_*$, the objective decreases throughout $[1,p]$, so its integer minimum equals $p^\beta e^{t/p}$. If $p\ge r_*$ (including $\infty$), the continuous minimum is
\[
 r_*^\beta e^{t/r_*}=e^\beta(t/\beta)^\beta.
\]
The feasible integer $\lceil r_*\rceil$ is at most twice $r_*$ and, since $p$ is an integer, is no greater than $p$ whenever $p\ge r_*$. Its value is at most $2^\beta$ times the continuous minimum. These comparisons prove~\eqref{eq:phase}, with constants depending only on the fixed parameters and $\beta$. They also show that a minimizing integer is $O_\beta(t)$.

When $\beta=0$, $e^{t/r}$ is decreasing, so $\D_{p,0}(\eps)=e^{t/p}$ for finite $p$ and $\D_{\infty,0}(\eps)=1$. If $p\ge\lceil t\rceil$, using $r=\lceil t\rceil$ gives $e^{t/r}\le e$; otherwise using $r=p$ attains the finite infimum. This proves both the endpoint and the finite-order algorithmic choice in Appendix~\ref{app:upper}. No infinite stencil is used.

\subsection{Qualitative smoothness does not give a uniform improvement}
\begin{corollary}[Smooth or entire instances without a shared derivative envelope]\label{cor:bare}
Keep only the nondegenerate baseline budgets and Assumption~\ref{ass:oracle}. Further require both population functions to be jointly $C^\infty$, or require in addition that they be analytic in $y$, but impose no common quantitative bounds on the high derivatives. The minimax complexity remains $\Theta(\eps^{-6})$. The hard functions can be entire in the scalar lower variable.
\end{corollary}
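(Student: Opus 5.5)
The statement has two halves, an upper bound $O(\eps^{-6})$ and a lower bound $\Omega(\eps^{-6})$, and I would prove them separately.

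\textbf{Upper bound.} The class in Corollary~\ref{cor:bare} requires only the baseline budgets, so it is contained in the class of Assumption~\ref{ass:population} with $p=1$. Equivalently, it is contained in $\cF_{1,\beta}$ for any envelope, since at $p=1$ the list~\eqref{eq:high} is empty. Theorem~\ref{thm:uniform} at $p=1$ gives an algorithm using $C\eps^{-4}\D_{1,\beta}(\eps)^2=C\eps^{-6}$ samples. That algorithm is Algorithm~\ref{alg:uniform} with $r=n=1$, and it uses only the baseline $L_2$ Taylor remainder. Adding qualitative $C^\infty$ or analyticity in $y$ only shrinks the class, so the upper bound transfers.

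\textbf{Lower bound: construction.} I would reuse the sine-lift instance~\eqref{eq:sine-instance} with width chosen as in the $p=1$ case, $h=K_0/\D_{1,\beta}(s)=K_0 s$. This family is jointly $C^\infty$, because it is built from smooth cutoffs, $\rho$, and a quadratic in $y$. It is entire in the scalar $y$, since its $y$-dependence is $\tfrac m2y^2$ plus a multiple of $\sin(y/h)$.

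\textbf{Lower bound: admissibility.} Only the baseline checks of Appendix~\ref{app:embedding} are needed, namely \eqref{eq:gxx}--\eqref{eq:joint-new2}. These were verified for every $p$, including $p=1$, using only~\eqref{eq:h-basic}, which holds with $h=K_0s$. Strong convexity, the $L_1$ and $L_2$ bounds, the initial gap, and zero initial inner error all hold for small $s$. The higher derivatives $D_y^{k+1}\nabla g_U$ are of size $s h^{-k}\asymp s^{1-k}$. These grow with accuracy, but the class imposes no bound on them, so this growth is permitted.

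\textbf{Lower bound: information argument.} The oracle~\eqref{eq:new-oracle}, the deferred-prefix simulation, and the counting of Appendix~\ref{sec:random} apply verbatim. They give failure whenever $N\le cT/q_s=c s^{-4}h^{-2}=c' s^{-6}$. Taking $s=16\eps$ yields $\Omega(\eps^{-6})$, with $d_x=O(\eps^{-2}\log(1/\eps))$.

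\textbf{Main obstacle.} The main obstacle is the quantifier structure rather than any new estimate. The hard instance must depend on $\eps$, with an $\eps$-dependent width. The key step is checking that no condition in the class depends on high-derivative sizes, so that every instance of the family is admissible simultaneously for all $\eps$. That check amounts to confirming that the baseline verification in Appendix~\ref{app:embedding} never invoked~\eqref{eq:all-order-envelope}, which is true for $p=1$.
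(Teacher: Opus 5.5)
Your proposal is correct and follows the paper's proof. The upper bound comes from the $p=1$ method, which uses only the baseline budgets. The lower bound reuses the $p=1$ sine lift with $h=K_0s$, which is jointly smooth and entire in $y$, passes every baseline admissibility check, and forces a budget of order $s^{-4}h^{-2}=s^{-6}$; your check that those baseline estimates never invoke \eqref{eq:all-order-envelope} is the point the paper leaves implicit.
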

\begin{proof}
The upper algorithm for $p=1$ uses only the baseline assumptions. For the lower bound, use the sine lift of Appendix~\ref{app:embedding} with $p=1$, hence $h=K_0s$. The split-chain functions are jointly smooth and both population functions are entire in $y$. All higher derivatives exist for each instance, but their unprescribed bounds can increase as $s\downarrow0$. The lower budget is $\Theta(s^{-4}h^{-2})=\Theta(s^{-6})$. These instances meet the additional qualitative requirements and therefore prove the claim.
\end{proof}
The sine is entire regardless of its frequency, so a complex radius without an associated uniform norm or growth bound is also not enough to exclude this example. In contrast, the factorial envelope in~\eqref{eq:profile} bounds the derivative growth at every real lower-variable point, uniformly over $x$, and therefore does exclude the baseline high-frequency family at $p=\infty$.

\subsection{Which assumptions the uniform upper method uses}
The Chebyshev--adjoint upper proof uses high lower-variable derivatives of $g$, together with baseline smoothness and $y$-Lipschitzness of $f$; it does not need the extra positive-order bounds on $D_y^q\nabla f$. We retain those bounds in the main model to preserve the finite-order framework being compared. The hard upper function satisfies them identically. Thus their strengthening to joint Lipschitzness leaves the stated matching results unchanged. No analogous claim is made for uniform bounds on pure $x$ derivatives of $g$.

\section{Degenerate budgets and a self-contained single-level comparison}\label{app:boundary}
This appendix allows noisy upper gradients, as in Assumption~\ref{ass:oracle}; it does not assert the same lower bound when the upper oracle is made exact. The strict inequality $L_1>\mu$ in Theorem~\ref{thm:uniform} is necessary for the stated rate when numerical budgets are fixed literally. Here we prove the alternative boundary rate without invoking a lower-bound theorem as a black box.

\begin{proposition}[Forced decoupling]\label{prop:decoupling}
Under the baseline population assumptions with $L_1=\mu$, one has
\[
 g_{yy}(x,y)=\mu I,\qquad g_{xy}(x,y)=0,\qquad
 g_y(x,y)=\mu y+c
\]
for a vector $c$ independent of $x$ and $y$. In particular, $y^*(x)=-c/\mu$ is constant.
\end{proposition}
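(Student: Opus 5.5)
The plan is to compare the joint Hessian of $g$ with the two-sided spectral bounds that Assumption~\ref{ass:population} imposes. Strong convexity in $y$ gives $g_{yy}\succeq\mu I$ everywhere. Joint $L_1$-Lipschitzness of the full gradient $\nabla g=(g_x,g_y)$ implies that the $y$-Jacobian block $D_y\nabla g=(g_{xy},g_{yy})$ has operator norm at most $L_1=\mu$. Here $D_y\nabla g$ is viewed as a linear map $\R^{d_y}\to\R^{d_x}\times\R^{d_y}$. These blocks exist, and are even Lipschitz, by the $L_2$ assumption, so the derivative bound holds pointwise everywhere.

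First I will show $g_{yy}=\mu I$. Fix a point and a unit vector $v\in\R^{d_y}$. Then
\[
 \mu\le\ip{v}{g_{yy}v}\le\norm{g_{yy}v}\le\norm{D_y\nabla g[v]}\le\mu.
\]
Equality holds throughout. Equality in Cauchy--Schwarz forces $g_{yy}v=\mu v$, since $g_{yy}v$ is parallel to $v$ with inner product $\mu$. As $v$ is arbitrary, $g_{yy}=\mu I$.

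Next I will show $g_{xy}=0$. Since $\norm{g_{xy}v}^2+\norm{g_{yy}v}^2=\norm{D_y\nabla g[v]}^2\le\mu^2$ and $\norm{g_{yy}v}=\mu$, we get $g_{xy}v=0$ for every $v$. The notation $g_{xy}=D_y\nabla_xg$ is the mixed block. Its transpose $D_x\nabla_yg$ equals it by symmetry of mixed partials, which holds because $g$ is $C^2$ as a function with a Lipschitz gradient and existing continuous blocks. Therefore $D_x g_y=0$ as well.

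Finally I will integrate. The map $y\mapsto g_y(x,y)-\mu y$ has zero $y$-derivative, and $x\mapsto g_y(x,y)$ has zero $x$-derivative. The domain $\R^{d_x}\times\R^{d_y}$ is connected, so $g_y(x,y)-\mu y$ is a constant vector $c$. The first-order condition $g_y=0$ then gives the unique minimizer $y^*(x)=-c/\mu$, independent of $x$.

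The only delicate point is the symmetry $D_x\nabla_y g=(D_y\nabla_x g)^\top$. It needs $g\in C^2$, or else it follows from Lipschitz gradients via the a.e.\ Hessian and Schwarz's theorem for $C^{1,1}$ functions. I would handle it by noting that $\nabla g$ is Lipschitz, so its distributional Jacobian is symmetric, and the blocks are continuous by assumption. Everything else is equality-case linear algebra.
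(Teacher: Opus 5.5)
Your proposal is correct and follows the paper's proof: the same equality chain $\mu\le\ip{v}{g_{yy}v}\le\norm{g_{yy}v}\le\norm{(g_{xy}v,g_{yy}v)}\le\mu$ forces $g_{yy}v=\mu v$ and $g_{xy}v=0$, and integration then gives the affine lower gradient. The one step you treat more carefully than the paper is the $x$-independence of $g_y$. There, drop the claim that $g$ is $C^2$, since no $g_{xx}$ block is assumed, and rely instead on your $C^{1,1}$/distributional symmetry argument. Alternatively, you can avoid symmetry altogether: $g_{xy}=0$ makes $\nabla_x g$ independent of $y$, so $g$ splits into a function of $x$ plus a function of $y$.
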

\begin{proof}
For any unit lower direction $v$, differentiating the full-gradient Lipschitz bound in that direction gives
\[
 \norm{(g_{xy}v,g_{yy}v)}\le\mu.
\]
Strong convexity gives $\ip{v}{g_{yy}v}\ge\mu$, hence $\norm{g_{yy}v}\ge\mu$. Equality must hold throughout. Consequently $g_{xy}v=0$ and $g_{yy}v=\mu v$ for every unit $v$. The blocks are continuous by assumption, so the identities hold everywhere. Integrating them gives the claimed affine lower gradient.
\end{proof}

\begin{proposition}[Boundary minimax rate]\label{prop:boundary-rate}
Fix positive baseline budgets except set $L_1=\mu$, and let $R_0\ge0$. Impose either a fixed finite positive derivative list or the envelope of Definition~\ref{def:profile}, including $p=\infty$. Under the same fresh-gradient oracle and dimension convention, the minimax complexity is $\Theta(\eps^{-4})$ for all sufficiently small $\eps$.
\end{proposition}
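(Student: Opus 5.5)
By Proposition~\ref{prop:decoupling}, the boundary problem reduces to single-level stochastic nonconvex optimization of $\Phi(x)=f(x,y_\star)$ with $y_\star=-c/\mu$. The plan is to prove the two halves separately in that reduced setting. Note that $\nabla\Phi(x)=f_x(x,y_\star)$, because $g_{xy}=0$. Also $\nabla\Phi$ is $L_1$-Lipschitz, since $\nabla f$ is jointly $L_1$-Lipschitz.

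\textbf{Upper bound.} The algorithm never needs Hessian information; it still has to locate $y_\star$ from noisy $g$ gradients.
\begin{enumerate}
\item Run one batched SGD solve on $g(x_0,\cdot)$ started at $y_0$. The lower objective does not depend on $x$, so a single solve suffices. Use the geometric allocation of Lemma~\ref{lem:batch} with $q_y=1-\mu^2/L_1^2=0$ at $L_1=\mu$ (or a fixed $q<1$ in the degenerate case) and target $\E\norm{Y-y_\star}\le\eps/(8L_1)$. This costs $O(\eps^{-2}+\log(2+R_0/\eps))$ samples.
\item Run normalized SGD on $x$ with direction $\widehat q_t$, the mean of $B=\lceil 1024\sigma^2\eps^{-2}\rceil$ fresh $f_x$ samples at $(x_t,Y)$. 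Conditionally on $Y$, the bias is at most $L_1\norm{Y-y_\star}$ and the noise contributes at most $\eps/32$ in expected norm.
\item The argument of the outer-direction subsection then applies verbatim. Take $K=O(\Delta L_1\eps^{-2})$ steps of length $\eta=\eps/(2L_1)$; the inequality $\ip a{z/\norm z}\ge\norm a-2\norm{z-a}$ and telescoping give $\E\norm{\nabla\Phi(x_I)}\le3\eps/4$.
\end{enumerate}
The total cost is $O(\eps^{-4})$. Because $Y$ is computed once, its error enters every step with the same bias, which is fine since the bias bound $\eps/8$ holds in expectation uniformly.

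\textbf{Lower bound.} I would embed a self-contained single-level hard instance in the upper function and take the trivial lower problem $g=\frac\mu2\norm y^2$. This $g$ satisfies every derivative budget (all high derivatives vanish) and every envelope, including $p=\infty$. Set
\[
 f(x,y)=A\mathcal H_U(x/\lambda)+b\,\ip{e_1}{y},
\]
using the wrapped chain of Appendix~\ref{sec:wrapper} with $\lambda=s/\tau$, $A=s^2/\tau$ and $T=\Theta(s^{-2})$. Lemma~\ref{lem:wrapper} gives a Hessian of order $\tau$, fitting $L_1$ for large fixed $K_0$. It also gives gap $AC_\Delta T\le\Delta$ and $\norm{\nabla\Phi}\ge s/2$ whenever a latent coordinate has magnitude below one. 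The upper oracle is Bernoulli-revealing, as in \citet{arjevani2023}: return the true gradient with the next-coordinate component scaled by $Z/q$, $Z\sim\Bern(q)$, and the prefix-truncated gradient otherwise. Here $q=\Theta(s^2/\sigma^2)$, because $\norm{\nabla_x f}$ restricted to the frontier part is $O(A/\lambda)=O(s)$. To stay inside our own toolbox, the frontier signal should come only from $A\mathcal B_U$, with the unbounded part $A\mathcal A_U$ revealing nothing new by Lemma~\ref{lem:chain}(iii).

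The deferred-prefix simulation and the spherical-cap Lemma~\ref{lem:cap-new} then carry over unchanged, with $d=O(T\log(NT))$. If $Nq\le T/8$, with probability at least $3/4$ the output has gradient norm at least $s/4$. Taking $s=16\eps$, this gives the lower bound $N\ge T/(8q)=\Omega(\eps^{-4})$.

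\textbf{Main obstacle.} The delicate step is checking that the noisy upper oracle has variance at most $\sigma^2$ globally and remains unbiased at arbitrary queries. This requires that the revealed component is a bounded gradient, namely that of $A\mathcal B_U$ with norm $O(s)$ by \eqref{eq:wrappedB}. It also requires that the sampled vector is the gradient of a fixed loss, $A\mathcal A_U+\frac{Z}{q}A\mathcal B_U$ plus the linear term. I expect this to work exactly as in Appendix~\ref{sec:random}, with the sine lift replaced by direct inclusion of $\mathcal B_U$ in $f$.
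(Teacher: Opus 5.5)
Your proposal is correct and follows the paper's proof in both halves. The upper bound matches because your single inner solve with contraction factor $q_y=0$ is exactly the paper's one-step estimator $\widehat y=y_0-\frac{1}{\mu M}\sum_{i}G_{g,y}(x_0,y_0;i)$, followed by normalized SGD in which the inner error acts as an $x$-uniform bias; the lower bound is the paper's instance $g=\frac\mu2\norm y^2$ with exact $g$ gradients and $f=A\mathcal H_U(x/\lambda)$, with Bernoulli revelation only through the bounded frontier $A\mathcal B_U$ at rate $q=(sC_B/\sigma)^2$ and the deferred-prefix simulator with the roles of the two oracles exchanged. Your extra linear term $b\ip{e_1}{y}$ and running the descent argument on $\Phi$ directly rather than on the frozen surrogate $f(\cdot,\widehat y)$ are harmless variations; with your bias budget, the verbatim constants give $13\eps/16$ rather than $3\eps/4$, which still suffices.
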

\begin{proof}[Upper bound]
Write $\bar y=-c/\mu$. At $(x_0,y_0)$, take $M$ fresh samples and estimate
\[
 \widehat y=y_0-\frac1{\mu M}\sum_{i=1}^M G_{g,y}(x_0,y_0;i).
\]
Then $\E\norm{\widehat y-\bar y}^2\le\sigma^2/(\mu^2M)$. Set
$d=\min\{1,\eps/(16L_1)\}$ and choose $M=\max\{1,\lceil\sigma^2/(\mu^2d^2)\rceil\}$. For the random but subsequently fixed objective $\widehat h(x)=f(x,\widehat y)$,
\begin{align*}
 \Lip(\nabla\widehat h)&\le L_1,\\
 \E[\widehat h(x_0)-\inf\widehat h]&\le\Delta+2L_0d=:D_h,\\
 \E\norm{\nabla\widehat h(x)-\nabla\Phi(x)}&\le L_1d.
\end{align*}
The last estimate holds also at a data-dependent $x$, because its pointwise bound $L_1\norm{\widehat y-\bar y}$ is uniform in $x$. The gap estimate follows from $|f(x,\widehat y)-f(x,\bar y)|\le L_0\norm{\widehat y-\bar y}$, uniformly in $x$.

Run normalized stochastic gradient descent on $\widehat h$ with fresh batches of size $S_h=\max\{1,\lceil256\sigma^2/\eps^2\rceil\}$, step length $\eta_h=\eps/(4L_1)$, and $K_h=\max\{1,\lceil8D_h/(\eta_h\eps)\rceil\}$ steps. Conditional expected gradient-estimation error is at most $\eps/16$. The descent inequality proved in Appendix~\ref{app:upper}, now averaged also over $\widehat y$, gives
\[
 \frac1{K_h}\sum_{k<K_h}\E\norm{\nabla\widehat h(x_k)}
 \le\frac{D_h}{K_h\eta_h}+\frac\eps8+\frac{L_1\eta_h}{2}
 \le\frac{3\eps}{8}.
\]
Adding the uniform gradient discrepancy of at most $\eps/16$ verifies the target for an independently uniform iterate. The fixed cost is $M+K_hS_h=O(\eps^{-4})$.
\end{proof}

\begin{proof}[Lower bound]
Use the split chain and wrapper of Appendices~\ref{sec:split}--\ref{sec:wrapper}. Choose a fixed $\tau>0$ sufficiently small that $\tau(C_{AA}+C_{BB})\le L_1$. For small $s$, put $\lambda=s/\tau$, $A=s^2/\tau$, and $T=\lfloor\Delta\tau/(2C_\Delta s^2)\rfloor$. Set
\[
 f_U(x,y)=A\mathcal H_U(x/\lambda),\qquad
 g_U(x,y)=\frac\mu2\norm y^2.
\]
The upper objective is independent of $y$, its gradient is $L_1$-Lipschitz, the gap is at most $\Delta$, and every additional lower-variable derivative vanishes. The lower initial error is zero. Give exact $g$ gradients.

For the $f$ oracle use the baseline $\overline f_U=A\mathcal A_U(x/\lambda)$ and perturbation $A\mathcal B_U(x/\lambda)$. The full gradient difference $v_U=\nabla f_U-\nabla\overline f_U$ has norm at most $sC_B$ globally. Let $q_s=(sC_B/\sigma)^2$ and return
\[
 G_f=\nabla\overline f_U+(Z/q_s)v_U,\qquad Z\sim\Bern(q_s),
\]
independently on each call. This is the gradient of a fixed two-point distribution of smooth losses and is globally unbiased with variance at most $\sigma^2$.

On small hidden coordinates, a failed $f$ response depends only on the already exposed frame prefix, by Lemma~\ref{lem:chain}(iii), while a successful response uses at most one additional column. Repeating the simulator in Appendix~\ref{sec:random} with the roles of the calls exchanged therefore gives the same coupling and arbitrary-output guarantee: for $Nq_s\le T/8$ and $d=O(T\log(NT))$, the expected gradient norm is at least $3s/8$. Here the failed response is the prefix-local baseline instead of a vector independent of $U$; prefix measurability is all the simulation requires. Taking $s=8\eps$ yields $N=\Omega(T/q_s)=\Omega(\eps^{-4})$.
\end{proof}

If $\Delta=0$, the supplied point is a global minimizer of a differentiable function and $\nabla\Phi(x_0)=0$, so zero samples suffice. If $L_0=0$, $f$ is independent of $y$ and the same single-level rate applies for positive gap and variance budgets. If $L_1<\mu$ and $d_y\ge1$, the class is empty. Other zero smoothness budgets may enforce further structural restrictions and are not included in Theorem~\ref{thm:uniform}.

\section{Detailed comparison and proof checks}\label{app:comparison}
The results compared below have different quantifiers and oracle capabilities. A common precision exponent is not sufficient to identify their minimax classes. The versions and primary-source URLs used here are specified in the bibliography.

\subsection{Accuracy, budgets, and regularity}
\paragraph{Accuracy conversion.}
If a theorem guarantees $\E\norm{\nabla\Phi}^2\le\varepsilon$, choosing $\varepsilon=\eps^2$ suffices for our target by Jensen's inequality. This is why the original $\widetilde O(\varepsilon^{-7/2})$ F$^2$SA guarantee becomes $\widetilde O(\eps^{-7})$ \citep{kwon2023}, and why the $\widetilde O(\varepsilon^{-3/2})$ guarantee of FdeHBO becomes $\widetilde O(\eps^{-3})$ under \emph{its stronger sample assumptions} \citep{yang2023fde}. The reverse implication from expected norm to expected squared norm does not hold.

\paragraph{The 2023 and 2025 fully first-order papers.}
The original F$^2$SA analysis includes a full Hessian-Lipschitz condition on $g$ and additional assumptions in its principal convergence result \citep{kwon2023}. The F$^2$BA framework explicitly assumes full Hessian Lipschitzness of $g$ in Assumption~3.1(c) \citep{chen2025}. Our main model requires joint Lipschitzness of $g_{xy}$ and $g_{yy}$ but not of $g_{xx}$. The sine lift's pure $x$ derivatives need not obey the stronger normalization. Their upper bounds motivate the precision question. Our matching result concerns the global two-block class with possibly noisy lower gradients; their deterministic, upper-only-noisy, and second-order-stationarity guarantees address additional regimes.

\paragraph{The finite-order 2026 framework.}
Assumptions~2.1--2.4 of \citet{chen2026} give the same kind of baseline conditions and globally unbiased bounded-variance gradients. In Assumption~2.5, their bound on $D_y^q\nabla f$ is stated jointly, while the $g$-derivative bound is in $y$. Our upper proof is valid under the displayed $y$-only assumptions, and the lower $f$ has vanishing such derivatives; imposing their stronger version on $f$ therefore preserves the matching result. The full-text Theorem~3.1 has the bound
\[
 O\!\left(
 p\Delta L_1\bar L^{2/p}\sigma^2\kappa^{9+2/p}
 \eps^{-4-2/p}\log\!\left(\frac{RL_1\bar L\kappa}{\eps}\right)
 \right)
\]
in its notation and nondegenerate parameter regime. We use the precision exponent $4+2/p$ stated in that theorem. Its high-order corollary gives a nearly $\eps^{-4}$ rate. Here $\bar L$ is a maximum over derivative budgets up to the chosen order, and $\kappa=\bar L/\mu$. Substituting $L_j=M\Lambda^j(j!)^\beta$ makes their dependence on the chosen order explicit. Our uniform theorem analyzes that derivative growth directly, rather than reinterpreting a fixed-order hidden constant.

\subsection{Local unbiasedness is a different observation contract}
Definition~1.1 of \citet{kwon2024} introduces a $y^*$-aware oracle: a call includes an approximate lower solution, and gradient unbiasedness is required only within a prescribed neighborhood of the true lower solution. Their lower bound uses a finite accuracy-dependent radius. Outside that neighborhood an oracle can supply uninformative responses not equal in expectation to the true gradient. Their upper method also works for the ordinary globally unbiased oracle (the infinite-radius case), establishing the existing $O(\eps^{-6})$ baseline upper bound. Their expected-complexity convention and oracle accounting differ from our fixed-budget formulation. The lower bounds therefore apply to distinct observation contracts.

In our lower instance, the mean of the full vector $G_g$ is $\nabla g$ at every real $(x,y)$, and its conditional variance is uniformly bounded. There is no accuracy-dependent region outside which the mean guarantee is suspended. The bounded sine signal, not a permitted oracle bias, prevents far-away $y$ queries from bypassing the information lower bound.

\subsection{Adjoint-based algorithms and sample reuse}
FdeHBO \citep{yang2023fde} is an important methodological antecedent to the projected adjoint in~\eqref{eq:main-adjoint}. Its equation~(9) differences two gradients with the \emph{same} sample at $y\pm\delta v$, and equations~(10)--(12) use shared-sample momentum differences. Its Assumption~3 imposes samplewise versions of its smoothness conditions and second-moment bounds on stochastic Hessian/Jacobian blocks. Thus ``Hessian-free computation'' there does not mean that only population smoothness and one-point fresh samples are assumed. Our claims concern the latter contract. The contributions of the upper proof are the uniform Chebyshev weight estimate, variance accounting under independent node samples, and the absence of multiplicative order and contraction factors in the final bound. The polynomial differentiation component builds on classical numerical differentiation \citep{fornberg1988}.

SPABA \citep{chu2024} addresses expectation and finite-sum formulations with variance reduction and a different derivative/sampling interface. SGHA \citep{gu2026} obtains baseline stochastic exponent six and an improved exponent under additional lower-level stochastic smoothness; its probability and bounded-iterate qualifications are also not identical to~\eqref{eq:target}. Their stronger sampling contracts explain why the improved exponents coexist with the lower bounds for fresh one-point samples studied here.

\subsection{Condition-number lower bounds and their versions}
The earlier first-order lower-bound work of \citet{ji2026} was subsequently merged and strengthened in \citet{chenji2026}. The latter's version~4, Theorem~5.2, gives
\[
 \Omega\!\left(\kappa_y^{5/2}L_1\Delta\eps^{-2}
       +\kappa_y^{9/2}\sigma^2L_1\Delta\eps^{-4}\right)
\]
for its zero-respecting SFO class, constrained domains, and projected-gradient mapping criterion. It improves condition-number factors while retaining a precision question for standard stochastic gradients. The explicit condition-number factors address a complementary complexity question to the order--accuracy tradeoff studied here. Our lower construction treats arbitrary randomized algorithms and growing upper dimension, under the unconstrained norm criterion and fixed regularity parameters.

\section{Experimental Constructions and Implementation}\label{app:experiments}
This appendix records the numerical choices behind Section~\ref{sec:experiments}. The lower construction uses the proof's rare-revelation oracle. The fixed-collection experiment instead uses independent additive Gaussian gradient noise. Both count complete first-order samples and evaluate the same uniform-output gradient-norm criterion.

\subsection{Numerical lower-bound construction}
We fix $(\mu,L_0,L_1,L_2,\Delta,\sigma^2)=(1,1,4,1,10^{-6},1)$, $R_0=0$, and the higher-derivative envelope $M=\Lambda=1$, $\beta=0$. For the chain and wrapper estimates we use conservative bounds
\[
 C_B=29,\quad C_{BB}=5000,\quad C_{AA}=52000,\quad
 C_H=1536,\quad C_\Delta=93,\quad B_0=1/4.
\]
Set $K_0=10^{10}$, $\tau=10^{-15}$, $b=1/2$, and $m=5/2$. For each $N$ and $p$, a geometric bisection chooses the largest feasible $s$ in this fixed parametrization, with a relative numerical margin of $10^{-10}$, subject to
\begin{align*}
 h&=K_0s^{1/p},& T&=\left\lfloor\frac{\Delta\tau}{2C_\Delta s^2}\right\rfloor\ge2,\\
 q_s&=C_*^2s^2h^2<1,&8Nq_s&\le T,
 \qquad C_*=(m/b)(C_B+B_0/(\tau K_0)).
\end{align*}
The sine lift has amplitude $s^2/\tau$ and upper-coordinate scale $s/\tau$. Each configuration is checked against the initial-gap, strong-convexity, full-gradient smoothness, block regularity, higher-derivative, implicit-root contraction, and revelation-budget inequalities. This maximizes a certified barrier within the specified parametrization, not over every admissible instance.

For $\delta=1/16$ and $R=64C_H\sqrt T$, the ambient dimension is
\[
 d_x=T+\left\lceil1.01\frac{2R^2}{\delta^2}
                    \log(16(N+1)T)\right\rceil.
\]
The tested methods start at zero and use rotation-equivariant operations. Their trajectories can therefore be represented exactly in latent coordinates without storing this ambient frame: rotating the instance rotates the trajectories and preserves their error norms. Coordinates are stored lazily without imposing a limit on newly activated directions. This is a computational reduction for the tested methods, not an empirical test of every unrestricted algorithm covered by the theorem.

The theoretical upper oracle is noiseless, and its numerical evaluation is charged. At every lower query the oracle draws an independent Bernoulli variable as in~\eqref{eq:main-rare}. A batch of $B$ samples at one point is simulated by a binomial count and charged $B$ calls. During consecutive nonrevealing lower samples, the inner recursion is affine; geometric waiting times and affine propagation skip these events computationally while charging every sample. At revealing events the nonlinear sine-lift gradient is evaluated without linearization. Consequently budgets up to $10^{16}$ denote simulated oracle samples, not that many explicit software calls. The event-skipping recursion is checked against direct recursion with the same Bernoulli trace.

The scalar primitive $r$ is evaluated by cumulative trapezoidal quadrature on 200001 equally spaced points in $[-2,2]$, followed by linear interpolation; its derivative and the other scalar fields use their explicit formulas. An independent split Gauss--Legendre check at 1401 points gives a maximum primitive discrepancy of $6.1\cdot10^{-8}$, with the 64- and 96-node references agreeing within $1.5\cdot10^{-14}$. This is a numerical evaluation check, separate from the exact-law batch and event simulation. The figures use floating-point evaluations of the theoretical construction rather than an exact-arithmetic oracle implementation.

For the budget-dependent panels, Ours uses the explicit tolerances and geometric allocation in Appendices~\ref{app:differencing} and~\ref{app:upper}, including $b_j=\lceil B|d_j|/W\rceil$. Each method's design tolerance is determined from its complete cost before observing errors so that the prescribed schedule fits $N$. The design tolerance is distinct from the target $s_N/16$. For the baselines, source scaling relations are instantiated with explicit constants; constants hidden in asymptotic source statements are not certified by this experiment. The 17 budgets, two orders, four methods, and eight noise seeds give 1088 trajectories.

For the fixed-target panels, the $p=2$ instance is constructed at reference budget $10^5$, giving $T=3$. For $p=4$, we construct the scale at budget $100$ and shorten the chain to $T=3$ while retaining its scale and oracle; the resulting certified revelation budget is four. Shortening preserves the upper bound on the initial gap. The dimension is fixed using the common horizon $10^{20}$. The targets are those printed in Figure~\ref{fig:lower-experiment} and differ between its two rows. All plotted budgets in one panel extend the same trajectories on the same instance. Each lower and adjoint solve uses two steps of length $0.4$, with difference radius or perturbation one; Kwon uses penalty parameter one. These choices change the source sampling schedules and Kwon's inner step rule.

Writing $w=x/\lambda$ for the construction's upper scale, the normalized methods use outer step $0.05$ for the first 2000 steps and then $0.05[1+(t-2000)/10]^{-3/2}$. Kwon uses the nonnormalized step $0.0005$ in $w$. The base batch is $b_{\rm base}=\lceil10^5/q_s\rceil$. Ours uses projection radius one and weighted node batches: with $S_2=\sum_jd_j^2$, set $B=\lceil b_{\rm base}W^2/S_2\rceil$ and $b_j=\lceil B|d_j|/W\rceil$. This preserves the variance upper bound of the corresponding equal-node allocation. All plotted Ours results use this weighted version. The other methods retain independent inner batches. The normalized methods run for $10^5$ outer steps; the unchanged Kwon trajectories were extended to $2\cdot10^5$ steps to reach the common error target. These short-chain experiments illustrate eventual passage below the barrier, not the asymptotic upper-bound schedules.

\subsection{The fixed collection}\label{app:experiment-family}
Let $x,y\in\R^4$, $z=y-Ax$, and $m_0=8$. For $L\in\{4,8,16\}$ define
\begin{align}
 g_L(x,y)&=\tfrac12\norm z^2+
     \frac{\rho}{m_0}\sum_{i=1}^{m_0}
         [1-\cos(c_{x,i}^{\mathsf T}x+c_{y,i}^{\mathsf T}y+\phi_i)],\label{eq:exp-g}\\
 f_L(x,y)&=\arctan(x_1/L)+\pi/2+b^{\mathsf T}z
       +\frac1{m_0}\sum_{i=1}^{m_0}[1-\cos(u_i^{\mathsf T}x+\theta_i)]\notag\\*
 &\hspace{1cm}+\frac{\tau_f}{m_0}\sum_{i=1}^{m_0}
         [1-\cos(e_{x,i}^{\mathsf T}x+e_{y,i}^{\mathsf T}y+\psi_i)],\label{eq:exp-f}
\end{align}
where $\rho=0.45$ and $\tau_f=0.2$. The initial matrix is $A=I/2$, $b$ is a normalized Gaussian vector, and the rows of $C=(C_x,C_y)$, $E=(E_x,E_y)$, and $U$ are independent normalized Gaussian vectors. The three phase vectors are uniform on $[0.3,2.3]^8$. We use one fixed coefficient draw (NumPy default generator, seed 1303), then zero the first upper-coordinate column of $A,C,E,U$. All three members share these coefficients, and both variables start at zero. Saved coefficient arrays make the instances independent of the generator version.

The unit row norms and $\norm A\le1/2$ give uniform budgets
\[
 \mu=0.5,\quad L_0=1.25,\quad L_1=2,\quad L_2=0.5,
 \quad \Delta=8,\quad R_0=0.5,\quad M=\Lambda=1,\quad\beta=0.
\]
For example, $g_{yy}\succeq(1-\rho)I$, and the higher lower-variable derivatives of the trigonometric terms are bounded uniformly in their order. The added arctangent is independent of $y$ and has range $\pi$, so it does not consume higher lower-variable derivative budgets. Its first-coordinate gradient is exactly $[L(1+(x_1/L)^2)]^{-1}$. Each full gradient observation adds fresh Gaussian noise with covariance $(\sigma^2/8)I_8$, where $\sigma=0.2$. Independent batch means are sampled from their exact Gaussian law, with all constituent calls charged.

\subsection{Frozen practical schedules for the rate experiment}
We retain the update structures of Algorithm~\ref{alg:uniform}, F$^2$SA-$p$, and the standard globally unbiased branch of Kwon's algorithm. The following constants are practical choices rather than a certification of all sufficient theorem constants. The query radii scale as
\[
 h(\eps)=h_{\rm ref}(\eps/0.04)^{1/p},\qquad
 \nu(\eps)=\nu_{\rm ref}(\eps/0.04)^{1/p}.
\]
The pairs $(h_{\rm ref},\nu_{\rm ref})$ are approximately $(0.718707,0.632932)$ at $p=2$ and $(1.644934,1.131744)$ at $p=4$. They come from a preceding calibration on separate coefficient draws and are frozen for the present collection; no radius is retuned at an observed accuracy point. That finite calibration is not a global bias certificate.

\paragraph{Ours.}
Use $J=\lceil\log(25/\eps)/[-\log(0.875)]\rceil$ steps in each restarted solve, lower step $0.4$, adjoint step $0.25$, and adjoint projection radius $2.5$. Set
\[
 B_y=\lceil2\sigma^2/\eps^2\rceil,\qquad
 B_v=B_o=\lceil4\sigma^2(1+W^2)/\eps^2\rceil.
\]
At inner step $k=0,\ldots,J-1$, use $\max\{1,\lceil B_y2^{-(J-1-k)/2}\rceil\}$ lower samples, and the analogous $B_v$ schedule for the adjoint. Every difference uses the paper's independent weighted allocation. The final estimate uses fresh samples, and the normalized outer step is $\eps/4$.

\paragraph{F$^2$SA-$p$.}
For even $p=2m$, use nonzero central nodes $j\in\{-m,\ldots,m\}\setminus\{0\}$ and weights
\[
 a_j=\frac{(-1)^{j+1}(m!)^2}{j(m-j)!(m+j)!}\quad(j>0),\qquad a_{-j}=-a_j.
\]
The warm-started inner update uses independent samples of $g_y+j\nu f_y$, step $\alpha_y=\min\{0.4,(\nu\eps/\sigma)^2/2\}$, and length $K=\lceil\log(2/(\nu\eps))/\alpha_y\rceil$. Every single-sample recursion is executed; it is not replaced by one batched update. For the final combination $\sum_j a_j(j\overline f_x+\overline g_x/\nu)$, each node uses
\[
 S=\left\lceil\frac{4\sigma^2}{\eps^2}
                \sum_j a_j^2(j^2+\nu^{-2})\right\rceil
\]
fresh samples of each function. The normalized outer step is $\eps/4$, and the per-outer cost is $2p(K+S)$. Perturbed lower strong convexity is checked for each configuration.

\paragraph{Kwon.}
Use penalty $\lambda_K=0.2/\eps$, inner length $T_K=\lceil4\sigma^2\lambda_K^2/\eps^2\rceil$, and final batch $M_K=\lceil4\sigma^2(1+2\lambda_K^2)/\eps^2\rceil$. Two warm-started inner trajectories use gradients $g_y+f_y/\lambda_K$ and $g_y$, with steps
\[
 \gamma_t=\frac{4+\lambda_K/(2+2\lambda_K)}{1+t},\qquad t=0,\ldots,T_K-1.
\]
This denominator follows the displayed schedule in \citet[Theorem~3.1]{kwon2024}; their proof uses the shifted denominator $1+\sqrt3+t$ in equation~(37). We specify the implemented convention here to make the comparison reproducible. The nonnormalized outer step is $0.25$, using the independent estimate of $f_x+\lambda_K(g_x(y)-g_x(z))$. The per-outer cost is $3T_K+3M_K$. Projection is the identity in the globally unbiased, unrestricted-query branch. No reusable-sample smoothness guarantee is invoked.

\subsection{Evaluation, diagnostics, and absolute costs}
The evaluator solves the four-dimensional lower problem by Newton iteration and computes the implicit hypergradient; it provides no search information. A directional finite-difference check has error below $2\cdot10^{-11}$, and the compiled gradient and noiseless Kwon recursion agree with their direct versions within $3\cdot10^{-16}$. Budgets are inspected on a grid growing approximately by a factor $1.15$. Thus $N_A$ is a recorded successful budget, not an exact continuous hitting time. Main runs use eight noise trajectories; an independent set repeats levels $0,3,6$. The repetitions assess noise sensitivity while keeping the problems fixed.

At the finest target $\eps=0.03145728$, the maximum recorded costs are
\begin{center}
\begin{tabular}{@{}lrr@{}}
\toprule
Method & $p=2$ & $p=4$\\
\midrule
Ours & $295{,}958{,}534$ & $149{,}191{,}742$\\
F$^2$SA-$p$ & $54{,}742{,}144$ & $40{,}421{,}712$\\
Kwon & $260{,}015{,}040$ & $260{,}015{,}040$\\
\bottomrule
\end{tabular}
\end{center}
Kwon has no order parameter, so its data are reused in both panels. The observed slow-coordinate contribution accounts for approximately $79$--$85\%$ of the uniform-output error for Ours and F$^2$SA-$p$, and $91$--$95\%$ for Kwon. Its effect is therefore visible separately from the sample schedules. For Ours, node and batch rounding account for approximately $23\%$ ($p=2$) and $31\%$ ($p=4$) of coarse-target per-outer costs, decreasing to about $0.7\%$ and $2.0\%$ at the finest target. These effects help explain why the measured slopes need not equal their leading theoretical exponents within this finite window.

The plotting sources retain per-member results, maximizing identities (including ties), absolute costs, source hashes, adjacent and nested-range slopes, and independent-noise repetitions. The measured ratios divide by the actual initial observed cost, whereas the theory guides fit only one multiplier per method using all seven points. The logarithmic factor and theoretical exponent remain fixed throughout.

\end{document}